\definecolor{dkgreen}{rgb}{0,0.6,0}
\definecolor{gray}{rgb}{0.5,0.5,0.5}
\definecolor{mauve}{rgb}{0.58,0,0.82}
\tiny\color{gray},
\newtheorem{thm}{Theorem}[section]
\newtheorem{dfn}[thm]{Definition}
\newtheorem{claim}[thm]{Claim}
\newtheorem{lem}[thm]{Lemma}
\newtheorem{conj}[thm]{Conjecture}
\def\QED{\hfill \rule{7pt}{7pt}}
\newcommand{\turangraph}[2]{T_{#1,#2}}
\newcommand{\turannumber}[2]{t_{#1,#2}}
\newcommand{\Rmnum}[1]{\expandafter\@slowromancap\romannumeral #1@}
\begin{document}
	\title{On the number of triangles in $K_4$-free graphs}
	
	\author{
    Jialin He\thanks{Department of Mathematics, Hong Kong University of Science and Technology,  Kowloon, Hong Kong. 
            Partially supported by Hong Kong RGC grant GRF 16308219 and Hong Kong RGC grant ECS 26304920. 
            Email: majlhe@ust.hk}\and
    Jie Ma\thanks{School of Mathematical Sciences, University of Science and Technology of China, Hefei, Anhui 230026, and Yau Mathematical Sciences Center, Tsinghua University, Beijing 100084, China.
			Research supported by National Key Research and Development Program of China 2023YFA1010201 and National Natural Science Foundation of China grant 12125106. 
            Email: jiema@ustc.edu.cn}\and
    Yan Wang\thanks{School of Mathematical Sciences, Shanghai Jiao Tong University, Shanghai 200240, China. 
            Supported by National Key R\&D Program of China under Grant No. 2022YFA1006400, National Natural Science Foundation of China (Nos. 12571376) and Shanghai Municipal Education Commission (No. 2024AIYB003). 
            Email: yan.w@sjtu.edu.cn}\and
    Chunlei Zu\thanks{Email: zucle@mail.ustc.edu.cn}
    }
	\date{}
	
	\maketitle

\begin{abstract}
Erd\H os asked whether for any \(n\)-vertex graph \(G\), the parameter 
\(p^*(G)=\min \sum_{i\ge 1} (|V(G_i)|-1)\) is at most \(\lfloor n^2/4\rfloor\), 
where the minimum is taken over all edge decompositions of \(G\) into edge-disjoint cliques \(G_i\).
In a restricted case (also conjectured independently by Erd\H{o}s), Gy\H ori and Keszegh 
[Combinatorica, 37(6) (2017), 1113--1124]
proved that \(p^*(G)\leq \lfloor n^2/4\rfloor\) for all \(K_4\)-free graphs \(G\).
Motivated by their proof approach, they conjectured that for any \(n\)-vertex \(K_4\)-free graph \(G\) with \(e\) edges,
and any greedy partition \(P\) of \(G\) of size \(r\),
the number of triangles in \(G\) is at least \(r(e-r(n-r))\).
If true, this would imply a stronger bound on \(p^*(G)\).
In this paper, we disprove their conjecture by constructing infinitely many counterexamples with arbitrarily large gap.
We further establish a corrected tight lower bound on the number of triangles in such graphs, which would recover the conjectured bound once some small counterexamples we identify are excluded.
\end{abstract}

\section{Introduction}
The {\it Tur\'an graph} $\turangraph{n}{k-1}$ denotes the complete balanced $(k-1)$-partite graph on $n$ vertices, and let $\turannumber{n}{k-1}$ be its number of edges.
A graph is \textit{$K_k$-free} if it contains no copy of the clique $K_k$ as a subgraph.
The celebrated Tur\'an's theorem~\cite{Turan}, a cornerstone of extremal graph theory, states that the Tur\'an graph $\turangraph{n}{k-1}$ is the unique $n$-vertex $K_k$-free graph with the maximum number of edges.
Exploring various interpretations and extensions of Tur\'an's theorem has long been one of the central themes in extremal graph theory.
An early result along this line, due to Erd\H{o}s, Goodman, and P\'osa~\cite{EGP}, shows that the edge set of every $n$-vertex graph can be decomposed into at most $\turannumber{n}{2}$ edge-disjoint triangles $K_3$ and individual edges.
Later, this was generalized by Bollob\'as~\cite{B1976}, who proved that for all $k\geq 3$, the edge set of every $n$-vertex graph can be decomposed into at most $\turannumber{n}{k-1}$ edge-disjoint cliques $K_k$ and individual edges.
It is clear that this result extends Tur\'an's theorem.
Another problem closely related to our study is the determination of the parameter $$p(G)=\min\sum\limits_{i\geq 1} |V(G_i)|$$ for any graph $G$,
where the minimum is taken over all edge decompositions of $G$ into edge-disjoint cliques $G_i$ for $i\geq 1$.
Chung~\cite{Chung}, Gy\H{o}ri and Kostochka~\cite{GK1979}, and Kahn~\cite{Kahn} independently proved that
$p(G)\le 2\turannumber{n}{2}$, with equality if and only if $G$ is the complete balanced bipartite graph $\turangraph{n}{2}$.

Erd\H{o}s (see \cite{Tuza}) later proposed to study the following enhanced variant of $p(G)$: 
\[
p^*(G)=\min \sum_{i\ge 1} \bigl(|V(G_i)|-1\bigr)
\]
for any graph $G$, where the minimum is taken over all edge decompositions of $G$ into edge-disjoint cliques $G_i$ for $i\geq 1$.
Clearly, $p^*(G)<p(G)$ holds for every graph $G$.
Erd\H{o}s posed the following challenging problem (see Problem~43 in~\cite{Tuza} and Conjecture~3 in~\cite{Gyori}): 
Does every $n$-vertex graph $G$ satisfy $p^*(G) \leq t_{n,2}$?
Recently, the first author, together with Balogh, Krueger, Nguyen, and Wigal~\cite{BHKNW}, proved an asymptotic version of this problem, showing that
$p^*(G) \leq (1+o(1))\turannumber{n}{2}$ holds for every $n$-vertex graph $G$, where $o(1)\to 0$ as $n\to \infty$.
A restricted case of Erd\H{o}s' problem was considered by Gy\H{o}ri~\cite{Gyori}, who estimated $p^*(G)$ for $K_4$-free graphs.
It turns out that this restricted version is equivalent to a problem of bounding edge-disjoint triangles in $K_4$-free graphs,
which was independently conjectured by Erd\H{o}s (see~\cite{H05}) and later resolved by Gy\H{o}ri and Keszegh~\cite{GK2017} in the following theorem.

\begin{thm}[Gy\H{o}ri and Keszegh~\cite{GK2017}]\label{Thm:original}
    Every $K_4$-free graph with $n$ vertices and $\turannumber{n}{2}+m$ edges contains at least $m$ edge-disjoint triangles.
\end{thm}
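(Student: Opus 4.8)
The plan is to prove the equivalent statement that every $K_4$-free graph $G$ on $n$ vertices contains at least $e(G)-t_{n,2}$ edge-disjoint triangles; write $\nu(G)$ for the maximum size of a family of edge-disjoint triangles in $G$, so the goal is $\nu(G)\ge e(G)-t_{n,2}$. A trivial bound already shows what is at stake: by the decomposition result of Erd\H os, Goodman and P\'osa~\cite{EGP}, the edges of $G$ split into $a$ triangles and $b$ single edges with $a+b\le t_{n,2}$, while $3a+b=e(G)=t_{n,2}+m$, whence $2a\ge m$ and $\nu(G)\ge\lceil m/2\rceil$. Since the theorem fails for $K_n$ (which has about $n^2/6$ edge-disjoint triangles but excess about $n^2/4$ over $t_{n,2}$), the $K_4$-free hypothesis must be used to push $\lceil m/2\rceil$ up to $m$.

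I would argue by induction on $n$, proving the statement for all values of $m:=e(G)-t_{n,2}$ at once; the cases $m\le 0$ and small $n$ are immediate. The one arithmetic fact needed is $t_{n,2}-t_{n-1,2}=\lfloor n/2\rfloor$. Suppose first that $G$ has a vertex $v$ with $\deg_G(v)\le\lfloor n/2\rfloor$. Then $G-v$ is $K_4$-free on $n-1$ vertices and $e(G-v)=t_{n,2}+m-\deg_G(v)=t_{n-1,2}+m'$ with $m':=m+\lfloor n/2\rfloor-\deg_G(v)\ge m$, so by the inductive hypothesis $G-v$ — and hence $G$ — contains at least $m'\ge m$ edge-disjoint triangles. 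Thus one may assume $\delta(G)\ge\lfloor n/2\rfloor+1$, and this dense regime is the entire difficulty.

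In this regime deleting a single vertex $v$ drops the excess $e(G)-t_{n,2}$ by $\deg_G(v)-\lfloor n/2\rfloor\ge 1$, so the easy induction collapses, and deleting two vertices is in general also too costly (for $n$ even, deleting any pair removes at least $n+1$ edges while lowering the Tur\'an number only by $n-1$). Instead one exploits the structure forced by $K_4$-freeness together with $\delta(G)>n/2$: every edge $uv$ lies in at least $\deg_G(u)+\deg_G(v)-n\ge 1$ triangles, and, crucially, the common neighbourhood $N(u)\cap N(v)$ of every edge is an independent set (otherwise $\{u,v\}$ together with an edge inside $N(u)\cap N(v)$ spans a $K_4$). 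The plan is to run a stability analysis on this class: either $G$ is structurally close to the complete tripartite Tur\'an graph $\turangraph{n}{3}$, on which an explicit (Latin-square type) construction on the tripartite skeleton furnishes enough edge-disjoint triangles, or $G$ admits a more delicate reduction — removing a carefully chosen triangle together with a single vertex of small ``effective degree'', or splitting a vertex — that passes to a smaller instance without losing excess.

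I expect the main obstacle to be precisely this dense case $\delta(G)\ge\lfloor n/2\rfloor+1$. It cannot be dispatched by any one-line deletion, because a $K_4$-free graph with minimum degree only slightly above $n/2$ need not even be $3$-partite: the extremal graphs for the Andr\'asfai--Erd\H os--S\'os theorem are $K_4$-free, non-$3$-colourable, and have minimum degree about $5n/8>n/2$ (and excess edges over $t_{n,2}$), so one is forced into a genuine structural case analysis of $K_4$-free graphs in this range of minimum degree — isolating the handful of near-extremal configurations and handling them by hand, while showing every other dense $K_4$-free graph admits a reduction. Carrying this out is the technical heart of the Gy\H ori--Keszegh argument.
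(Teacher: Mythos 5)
Your proposal is not a proof: everything after the reduction to the dense case is a statement of intent rather than an argument. The parts you do execute are fine — the Erd\H{o}s--Goodman--P\'osa bound $\nu(G)\ge\lceil m/2\rceil$, the identity $t_{n,2}-t_{n-1,2}=\lfloor n/2\rfloor$, and the induction step deleting a vertex of degree at most $\lfloor n/2\rfloor$ are all correct. But once you reach $\delta(G)\ge\lfloor n/2\rfloor+1$ you only list possible strategies (``stability analysis,'' a ``Latin-square type construction'' near the Tur\'an graph, a ``more delicate reduction'') and then explicitly defer their execution, conceding that this is ``the technical heart'' of the argument. That dense regime is the entire content of the theorem — as your own $K_n$ example shows, the trivial bound is off by a factor of $2$ exactly where the graph is dense — so the proposal has a genuine and central gap. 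The auxiliary observations you make there (every edge lies in at least $\deg(u)+\deg(v)-n$ triangles; common neighbourhoods of edges are independent) are true but are nowhere assembled into a proof, and it is not at all clear that the stability route you gesture at can be made to work without substantial new ideas, precisely because of the non-$3$-colourable examples you yourself cite.

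For comparison, the route recorded in this paper is entirely different and avoids any minimum-degree dichotomy. One takes a greedy partition $P$ of $G$ into cliques and combines two facts: the Huang--Shi inequality $t_e(G)\ge t(G)/r(P)$, valid for $K_4$-free graphs (this is the only place $K_4$-freeness is used), and the counting inequality $t(G)\ge r(P)\bigl(e(G)-t_{n,2}\bigr)$, which holds for \emph{all} $n$-vertex graphs and is proved by symmetrization, reducing to complete multipartite graphs where one verifies $g(G,P)\le 0$ directly. Chaining these gives $t_e(G)\ge e(G)-t_{n,2}=m$ immediately. If you want to salvage your approach, you would need to actually carry out the structural analysis of $K_4$-free graphs with $\delta(G)>n/2$; the greedy-partition argument is the known way around that difficulty.
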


A crucial concept in their proof \cite{GK2017} is the following special partition of vertex-disjoint cliques.  
We call it {\it greedy} because it can be obtained by iteratively applying the following greedy procedure: at each step, select a largest clique in the remaining graph and then delete the vertices of this clique.

\begin{dfn}
A greedy partition $P$ of a graph $G$ is a partition of $V(G)$ into disjoint cliques $T_i$ for $i\geq 1$ such that $|T_i| \geq |T_{i+1}|$ for each $i\geq 1$ and, for each $\ell \geq 1$, the union of cliques with size at most $\ell$ induces a $K_{\ell+1}$-free subgraph.
The \textit{size} $r(P)$ of $P$ denotes the number of cliques in this partition.
\end{dfn}

Throughout, let $t(G)$ denote the number of triangles in a graph $G$, and let $t_e(G)$ denote the maximum number of edge-disjoint triangles in $G$.\footnote{When the graph $G$ is clear from context, we simply write $t$ and $t_e$.}  
A useful lemma of Huang and Shi~\cite{HS2014} relates these parameters via greedy partitions:
for any $K_4$-free graph $G$ and any greedy partition $P$ of $G$, we have 
\begin{equation}\label{equ:t_e}
t_e(G)\geq t(G)/r(P).
\end{equation}
Gy\H{o}ri and Keszegh~\cite{GK2017} employed an approach based on greedy partitions to show
\begin{equation}\label{equ:t/r}
    t(G) \ge r(P) \bigl(e(G)-\turannumber{n}{2}\bigr)
\end{equation}
for any $n$-vertex graph $G$, without requiring $K_4$-freeness.
Combined with \eqref{equ:t_e}, this immediately implies Theorem~\ref{Thm:original}.
To explain the proof of \eqref{equ:t/r} in more detail,
for any greedy partition $P$ in an $n$-vertex graph $G$ with $e$ edges, we define $r:=r(P)$, $t:=t(G)$, and $$g(G,P):=r(e-r(n-r))-t.$$
Using symmetrization arguments,
they \cite{GK2017} showed that it suffices to verify \eqref{equ:t/r} for complete multi-partite graphs $G$. 
Since $g(G,P)\le 0$ for any complete multipartite graph $G$ (see \cite[Lemma~8]{GK2017}), it follows that
\( \frac{t}{r} \ge e - r(n-r) \ge e - t_{n,2} \)
for such graphs, which establishes \eqref{equ:t/r} in full.

Motivated by this approach, Gy\H{o}ri and Keszegh~\cite{GK2017} proposed the following stronger conjecture.

\begin{conj}[Gy\H{o}ri and Keszegh~\cite{GK2017}, Conjecture 2]\label{Conj1.2}
Let $G$ be an $n$-vertex $K_4$-free graph with $e$ edges, and let $P$ be any greedy partition of $G$ of size $r := r(P)$.  
Then
\[
t(G) \ge r \bigl(e - r(n-r)\bigr), \quad \text{or equivalently, } \quad g(G,P) \le 0,
\]
and consequently, \(t_e(G) \ge e - r(n-r).\)
\end{conj}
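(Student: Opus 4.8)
The plan is to fix the combinatorial type of the greedy partition, reduce the inequality to a clean extremal problem, and then attack that problem by symmetrization toward a complete multipartite configuration. Since $G$ is $K_4$-free, every clique $T_i$ in $P$ has size at most $3$; write $a,b,c$ for the number of cliques of size $3,2,1$ respectively, so that $n=3a+2b+c$, $r=a+b+c$, and $n-r=2a+b$. The pair $(r,\,n-r)$ is thus determined by the type $(a,b,c)$ alone. Because $g(G,P)=r\,e-r^2(n-r)-t$, once the type is fixed the only graph-dependent terms are $e$ and $t$, so maximizing $g(G,P)$ over all $K_4$-free graphs $G$ that admit $P$ as a greedy partition is equivalent to maximizing $\Phi(G):=r\,e(G)-t(G)$. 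It therefore suffices to prove $\Phi(G)\le r^2(n-r)$ for every extremal $G$, after which the stated consequence $t_e(G)\ge e-r(n-r)$ follows at once from \eqref{equ:t_e}.

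First I would try to pin down the extremal $G$ through local \emph{cloning} moves. Given two non-adjacent vertices $u,v$ lying in distinct cliques of $P$, consider replacing the external edges at $u$ by a copy of those at $v$ (or vice versa): since $uv$ is not an edge, no single edge or triangle of $G$ uses both $u$ and $v$, so $\Phi$ is affine in this split and one of the two symmetric directions does not decrease $\Phi$. Iterating such moves should collapse non-adjacency toward an equivalence relation and push $G$ toward a complete multipartite graph. As $G$ must remain $K_4$-free, the only admissible limit is a complete tripartite graph $K_{x,y,z}$ with $x\ge y\ge z$ and $x+y+z=n$.

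For the reduced graph the inequality can be checked by hand. In $K_{x,y,z}$ one has $e=xy+yz+zx$ and $t=xyz$; the greedy procedure first extracts $z$ triangles, then $y-z$ edges, then $x-y$ singletons, so $r=x=\max\{x,y,z\}$ and $n-r=y+z$. Hence $e-r(n-r)=xy+yz+zx-x(y+z)=yz$ and $r\bigl(e-r(n-r)\bigr)=x\cdot yz=xyz=t$, giving $g(G,P)=0$. This is exactly the general multipartite bound $g(G,P)\le 0$ of \cite[Lemma~8]{GK2017}. So if the symmetrization of the previous paragraph can be carried out inside the class of $K_4$-free graphs without destroying the greedy partition, the conjectured bound follows, with the complete tripartite graphs as the tight extremal examples.

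The hard part is precisely that symmetrization step, and I expect it to be the real obstacle. A cloning move that keeps $\Phi$ non-decreasing must simultaneously (i) keep every $T_i$ a clique—so a vertex cannot simply shed the internal edges to its partners—(ii) preserve $K_4$-freeness, which fails exactly when the new neighborhood of $u$ completes a triangle at $v$ into a clique of size four, and (iii) respect the defining constraints of a greedy partition, both the ordering $|T_i|\ge|T_{i+1}|$ and the $K_{\ell+1}$-freeness of the union of cliques of size at most $\ell$. Controlling all three while improving $\Phi$ is delicate, and the argument is most likely to break at local maxima of $\Phi$ that are \emph{not} complete tripartite. (A direct induction deleting the smallest clique is cleaner to set up, since removing a singleton leaves $n-r$ unchanged and preserves greediness, but the naive version leaves a full term $e(G')$ on the wrong side and does not close without extra structural input.) Resolving this symmetrization obstacle is the key step that would turn the plan into a proof.
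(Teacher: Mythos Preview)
Your plan cannot succeed because the statement is false: the paper \emph{disproves} Conjecture~\ref{Conj1.2}. Concretely, there is a $9$-vertex $K_4$-free graph $F_1$ whose greedy partition consists of three triangles $T_1,T_2,T_3$, with $4,4,5$ cross-edges between the three pairs, for which $e=22$, $r=3$, $n-r=6$, and $t=11$, giving
\[
g(F_1,P)=r\bigl(e-r(n-r)\bigr)-t=3(22-18)-11=1>0.
\]
A second $9$-vertex example $F_2$ (cross-edge pattern $5,5,4$) has $e=23$, $t=14$, and again $g=1$. Blow-ups of these graphs show that $g(G,P)$ can be made arbitrarily large (of order $n^3$), so the failure is not a boundary accident.

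You correctly identified the fragile point: the symmetrization step. Your suspicion that there may be ``local maxima of $\Phi$ that are not complete tripartite'' is exactly what happens, and $F_1,F_2$ are such maxima. The cloning move you describe cannot in general be carried out while preserving $K_4$-freeness: when $u$ and $v$ lie in distinct triangles of $P$ with the cross-edge pattern of $F_1$ or $F_2$, replacing $u$'s external neighborhood by $v$'s creates a $K_4$ with vertices of $v$'s own triangle, so the $\Phi$-improving direction is blocked. Thus the reduction to complete tripartite graphs, which worked in \cite{GK2017} for the weaker inequality \eqref{equ:t/r} (where the target $r\,t_{n,2}$ absorbs the slack), genuinely fails for the sharper target $r^2(n-r)$. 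The paper's positive result (Theorem~\ref{Thm:Main Thm}) salvages the bound by subtracting a correction term $\omega(P)$ counting exactly those triples of cliques that induce one of these obstruction graphs.
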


In this paper, we first disprove Conjecture~\ref{Conj1.2} by constructing infinitely many counterexamples in a strong sense:
for some $K_4$-free graphs $G$ and greedy partitions $P$, the quantity $g(G,P)$ is postive and can be arbitrarily large.

\begin{thm}\label{Thm: Counter egs}
For any positive integer $\lambda$, there exists an $n$-vertex $K_4$-free graph $G$ with $e$ edges and a greedy partition $P$ of size $r$ such that
\(t(G) \le r \bigl(e - r(n-r)\bigr) - \lambda.\)
\end{thm}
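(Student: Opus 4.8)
The plan is to construct an explicit family of $K_4$-free graphs together with greedy partitions for which $g(G,P)$ grows without bound. First I would look for the smallest possible ``atomic'' counterexample: a $K_4$-free graph $H$ on few vertices with a greedy partition $P_0$ for which $g(H,P_0)>0$. A natural place to search is near a Tur\'an-type extremal configuration where the greedy partition is forced to use many singletons or small cliques even though the graph is fairly dense. For instance, one can try a balanced blow-up of $C_5$ (which is $K_4$-free and triangle-free), or a graph built from a few triangles glued along a common structure so that the largest clique is small but the edge count is comparatively high; here $e-r(n-r)$ can be made positive while $t$ stays small or even zero. Since $g(H,P_0)=r(e-r(n-r))-t$, any $H$ with $t(H)=0$, $r(n-r)<e$, and a \emph{triangle-free} greedy partition (forced automatically when $H$ is triangle-free, since then every clique in $P_0$ has size at most $2$) already yields $g>0$; the $C_5$ blow-up is exactly such an example and is the cleanest candidate.

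Once a single graph $H$ with $g(H,P_0)=\lambda_0>0$ is in hand, the second step is an amplification argument to boost the gap to an arbitrary $\lambda$. The plan is to take $\lambda$ (suitably scaled) disjoint copies of $H$ and combine them so that the relevant quantities add up favourably. The simplest version is a pure vertex-disjoint union $G=kH$: then $n$, $e$, $r$, and $t$ all scale by $k$, so $e-r(n-r)$ does \emph{not} scale linearly, which means a naive disjoint union does not work and one must instead add edges between the copies, or embed the copies inside one common balanced multipartite ``host'' so that $r(n-r)$ stays close to $\turannumber{n}{2}$ while the extra edges of the $H$-copies push $e-\turannumber{n}{2}$ up by a controlled amount. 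Concretely, I would place $k$ copies of $H$ into the parts of a large balanced complete bipartite (or $C_5$-blow-up) skeleton in such a way that (i) $K_4$-freeness is preserved, (ii) the natural greedy partition of the skeleton extends to a greedy partition of $G$ of a predictable size $r$, and (iii) each inserted copy of $H$ contributes a fixed positive amount to $e-r(n-r)$ while contributing only $t(H)$ triangles. Summing the contributions over the $k$ copies and multiplying by $r$ gives $g(G,P)\ge c\,k$ for some absolute constant $c>0$, and choosing $k=\lceil\lambda/c\rceil$ finishes the proof.

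The main obstacle I anticipate is step (ii)--(iii): controlling the greedy partition of the composite graph. A greedy partition is not arbitrary --- it must repeatedly peel off a \emph{largest} remaining clique, and the ``$K_{\ell+1}$-free on small cliques'' condition is delicate. When I glue copies of $H$ into a host, I have to make sure the greedy procedure does not accidentally find large cliques crossing between a copy of $H$ and the host (which would change $r$ and could destroy the gap), and that the cliques of size $\le \ell$ still induce a $K_{\ell+1}$-free subgraph. Keeping the host triangle-free (e.g. bipartite or $C_5$-blow-up) and keeping each $H$-copy $K_4$-free with all its cliques small makes every clique in the whole graph have size $\le 3$, so the greedy condition reduces to checking that the singletons-and-edges part is triangle-free and the edges-and-triangles part is $K_4$-free --- both of which can be arranged by hand. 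The bookkeeping of exactly how many cliques $r$ the greedy partition uses, and verifying the inequality $t(G)\le r(e-r(n-r))-\lambda$ with the correct constants, is then a finite computation that I would carry out explicitly for the chosen $H$ and host; I expect the arithmetic to be routine once the combinatorial picture is pinned down, so the real work is in choosing $H$ and the gluing pattern so that all three requirements hold simultaneously.
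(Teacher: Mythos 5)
There is a genuine gap, and it is in the very first step: your proposed ``atomic'' counterexample cannot exist. You suggest taking a triangle-free graph $H$ (e.g.\ a balanced blow-up of $C_5$) with $t(H)=0$ and $e>r(n-r)$, but for a triangle-free graph no greedy partition can satisfy $e>r(n-r)$. Indeed, a greedy partition of a triangle-free graph is a maximal matching of some size $b$ together with $c=n-2b$ singletons forming an independent set, so $r=b+c$ and $n-r=b$. Triangle-freeness forces every vertex $w$ to be adjacent to at most one endpoint of each matching edge, which gives $2e_{MM}+e_{MI}\le bn$ (summing $\deg(u_i)+\deg(v_i)\le n$ over the matching edges $u_iv_i$) and $e_{MI}\le (n-2b)b$ (each unmatched vertex has degree at most $b$); since every edge meets the matched set, $e=e_{MM}+e_{MI}\le \tfrac12 bn+\tfrac12(n-2b)b=b(n-b)=r(n-r)$. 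Hence $g(H,P_0)=r(e-r(n-r))\le 0$ for every triangle-free $H$, and in the balanced $C_5$-blow-up one even has $e=5m^2<\tfrac{25}{4}m^2=r(n-r)$, so the gap goes the wrong way. More generally, Gy\H{o}ri and Keszegh proved $g(G,P)\le 0$ for all complete multipartite graphs, so any counterexample must be genuinely ``lopsided''; the paper's minimal examples $F_1$ and $F_2$ are unions of \emph{three triangles} with carefully unbalanced numbers of cross edges ($4,4,5$ and $5,5,4$ respectively), where the discrepancy is exactly $1$, and the excess is only realized because the greedy partition consists of triangles rather than edges and singletons.

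The second step (amplification) is also not salvageable as written, though you correctly observe that a disjoint union fails because $e-r(n-r)$ does not scale linearly. The paper's amplification is much simpler than embedding copies into a bipartite host: it takes the $(k_1,k_2,k_3)$-blow-up of the single $9$-vertex example $F_i$, for which all parameters are explicit polynomials in $k_1,k_2,k_3$ and the gap comes out to exactly $k_1k_2k_3$, which is unbounded. Since your proposal never exhibits any concrete graph with $g>0$ and the one candidate you name provably has $g\le 0$, the argument does not establish the theorem.
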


Our proof of Theorem~\ref{Thm: Counter egs} begins by constructing four special graphs $F_1$, $F_2$, $F_3$, and $F_4$ (all defined in Section~\ref{sec:counterexamples}, each formed from at most three cliques).  
The first two are minimal counterexamples to Conjecture~\ref{Conj1.2}, while the ``$3$-blow-up" of $F_3$ and $F_4$ yield additional counterexamples.  
We then perform certain operations on these graphs to generate infinitely many larger, non-isomorphic counterexamples.

Our second contribution is to establish a corrected lower bound on the number of triangles in $K_4$-free graphs, using a new approach that is distinct from the method of Gy\H{o}ri and Keszegh~\cite{GK2017}.  
To state the result, we first introduce some notation. Let $P = \{T_1, T_2, \dots, T_r\}$ be any greedy partition of $G$ of size $r$. For indices $1 \le i < j < k \le r$, we say that a triple $(i,j,k)$ is \emph{$P$-bad} if the induced subgraph $G[T_i \cup T_j \cup T_k]$ is isomorphic to one of $F_1, F_2, F_3$, or $F_4$.  
We then define \(\omega(P)\) to be the total number of $P$-bad triples.  
The following result shows that the lower bound on the number of triangles in Conjecture~\ref{Conj1.2} can be corrected for all $K_4$-free graphs by subtracting \(\omega(P)\).

\begin{thm}\label{Thm:Main Thm}
Let $G$ be an $n$-vertex $K_4$-free graph with $e$ edges, and let $P$ be any greedy partition of $G$ of size $r$.  
Then
\[
t(G) \ge r \bigl(e - r(n-r)\bigr) - \omega(P).
\]  
In particular, if $G$ contains none of the induced subgraphs $F_1$, $F_2$, $F_3$, or $F_4$, the conclusion of Conjecture~\ref{Conj1.2} holds.
\end{thm}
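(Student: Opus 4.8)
The plan is to restate the inequality as the nonnegativity of the potential
\[
\Phi(G,P)\;:=\;t(G)+\omega(P)-r\bigl(e-r(n-r)\bigr),
\]
and to prove $\Phi(G,P)\ge 0$ by strong induction on $e$. Two facts are used throughout: since $G$ is $K_4$-free, every part of $P$ has size $1$, $2$, or $3$; and greediness forces, for $i<j$, that no vertex of $T_j$ is complete to $T_i$ (and more generally that for $i<j<k$ any edge between $T_j$ and $T_k$ has at most $|T_i|-2$ common neighbours in $T_i$), while the union of the parts of size $\le 2$ is triangle-free. The base cases are immediate: if $r\le 1$ then $G\in\{K_1,K_2,K_3\}$ and the inequality is checked directly, and if $e\le r(n-r)$ then $r(e-r(n-r))\le 0\le t(G)+\omega(P)$.

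For the inductive step assume $r\ge 2$ and $e>r(n-r)$. Since $\sum_i\binom{|T_i|}{2}\le r(n-r)$ when $r\ge 2$, the graph $G$ has at least $e-r(n-r)>0$ \emph{crossing} edges, i.e.\ edges joining two distinct parts. For any crossing edge $xy$, the partition $P$ is still greedy for $G-xy$ with the same size $r$ (the parts remain cliques, the sizes are unchanged, and the $K_{\ell+1}$-freeness conditions only get easier in a subgraph), and $G-xy$ is still $K_4$-free, so the induction hypothesis applies to it. Writing $\tau_{xy}$ for the number of common neighbours of $x$ and $y$, and $\omega(P;H)$ for the number of $P$-bad triples in a graph $H$, deleting $xy$ drops $e$ by $1$ and $t(G)$ by exactly $\tau_{xy}$, so
\[
\Phi(G,P)-\Phi(G-xy,P)\;=\;\tau_{xy}+\bigl(\omega(P;G)-\omega(P;G-xy)\bigr)-r.
\]
Thus the theorem reduces to the following key lemma: \emph{if $G$ is $K_4$-free, $P$ is a greedy partition of size $r\ge 2$, and $e>r(n-r)$, then some crossing edge $xy$ satisfies $\tau_{xy}+\omega(P;G)-\omega(P;G-xy)\ge r$.} Removing such an edge and applying the induction hypothesis gives $\Phi(G,P)\ge\Phi(G-xy,P)\ge 0$; and the ``in particular'' statement follows since $\omega(P)=0$ when $G$ has no induced $F_1,F_2,F_3,F_4$.

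The key lemma is the heart of the matter and the step I expect to be the main obstacle. The natural approach is by contradiction: assume every crossing edge $xy$ has $\tau_{xy}+\omega(P;G)-\omega(P;G-xy)\le r-1$ and deduce $e\le r(n-r)$. The subtlety is that deleting a crossing edge can both create and destroy $P$-bad triples, so the $\omega$-term cannot simply be dropped; instead one works pair of parts at a time. For each pair $(i,j)$ the crossing edges between $T_i$ and $T_j$ form a bipartite graph on at most $3+3$ vertices which, by the greedy constraints, cannot be dense in too many ways; one shows that once the pairs are collectively dense enough to force $e>r(n-r)$, some pair must contain a crossing edge whose endpoints have at least $r$ common neighbours — unless the three parts $T_i,T_j,T_k$ involved induce one of $F_1,F_2,F_3,F_4$, in which case that triple's contribution to $\omega(P)$ supplies exactly the missing unit. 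Carrying this out amounts to a finite but delicate classification of the triples of parts on which the local inequality can fail, and checking that it fails — always by exactly one — precisely on copies of $F_1,F_2,F_3,F_4$; pinning down the constants so that the bound is tight and the exceptional family is exactly $\{F_1,F_2,F_3,F_4\}$ is where the real difficulty lies.
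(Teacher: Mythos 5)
Your setup is correct as far as it goes: the identity
\[
\Phi(G,P)-\Phi(G-xy,P)=\tau_{xy}+\bigl(\omega(P;G)-\omega(P;G-xy)\bigr)-r
\]
is right, deleting a crossing edge does preserve greediness of $P$, and the base cases are fine. But the key lemma to which you reduce everything --- the step you yourself flag as the main obstacle --- is \emph{false}, already for $r=2$, so the induction cannot be completed in this form. Take $G=K_{1,1,k}$ with parts $\{x\},\{y\},\{z_1,\dots,z_k\}$ and $k\ge 2$. Its unique greedy partition is $T_1=\{x,y,z_1\}$ together with the singletons $z_2,\dots,z_k$, so $r=k$, $n=k+2$, $e=2k+1>2k=r(n-r)$, $t=k$, $\omega(P)=0$, and the bound of the theorem holds with equality. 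Every crossing edge of $G$ is of the form $xz_j$ or $yz_j$ with $j\ge 2$ and lies in exactly one triangle, so $\tau_{xy}+\Delta\omega=1<r$ for every crossing edge; the only edge lying in $r$ triangles is $xy$, which is internal to $T_1$ and hence not deletable without destroying the partition. A second, size-$3$-parts example: delete from $F_1$ the crossing edge $ae$ (which lies in the two triangles $ace,aef$). The resulting graph $G'$ has $r=3$, $e=21>18=r(n-r)$, $t=9$, $\omega=0$, is again tight, and every crossing edge of $G'$ lies in at most two triangles while $\omega(G')=\omega(G'-xy)=0$; so again no admissible edge exists. In both cases $\Phi$ strictly \emph{increases} under every allowed deletion, and knowing $\Phi(G-xy)\ge 0$ tells you nothing about $\Phi(G)$.

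The conceptual problem is that the coefficient $r$ in $r\bigl(e-r(n-r)\bigr)$ is a statement about the \emph{average} number of triangles per excess edge, not a per-edge statement: in the tight (complete multipartite and near-multipartite) examples the excess $e-r(n-r)$ is small while most crossing edges lie in far fewer than $r$ triangles, so no single-edge deletion can carry the full cost $r$. Any repair would have to delete many edges at once, or use a different potential, which is essentially abandoning the scheme. For comparison, the paper avoids this entirely: it inducts on $r$ rather than on $e$, writing $t(G)$ via a double-counting identity in terms of the triple sums $t_{ijk}$ and the pair quantity $M_2(G;P)$, proving a key lemma $\sum t_{ijk}\ge F_0(G;P)+(r-2)C-\omega(P)$ by deleting whole parts $T_\ell$ and averaging over $\ell$, with the base case $r=3$ (where $F_1,\dots,F_4$ arise as the exact exceptions) verified by an exhaustive computer search. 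Your classification instinct for triples of parts is in the right spirit --- it is exactly the $r=3$ base case of the paper --- but it must be embedded in an induction on the number of parts, not on the number of edges.
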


The rest of the paper is organized as follows.
We prove Theorem~\ref{Thm: Counter egs} in Section~\ref{sec:counterexamples}.
The proof of Theorem~\ref{Thm:Main Thm} will be presented in Section~\ref{Sec:proof of main thm}.
In Section~\ref{Sec:Concluding Remarks}, we give some concluding remarks.

\section{Proof of Theorem~\ref{Thm: Counter egs}: Counterexamples to Conjecture~\ref{Conj1.2}}\label{sec:counterexamples}

In this section, we prove Theorem~\ref{Thm: Counter egs} by constructing infinitely many counterexamples to Conjecture~\ref{Conj1.2}.  
The constructions are divided into two types, presented in the following two subsections.

\subsection{Counterexamples of Type I}\label{subsec:type1}

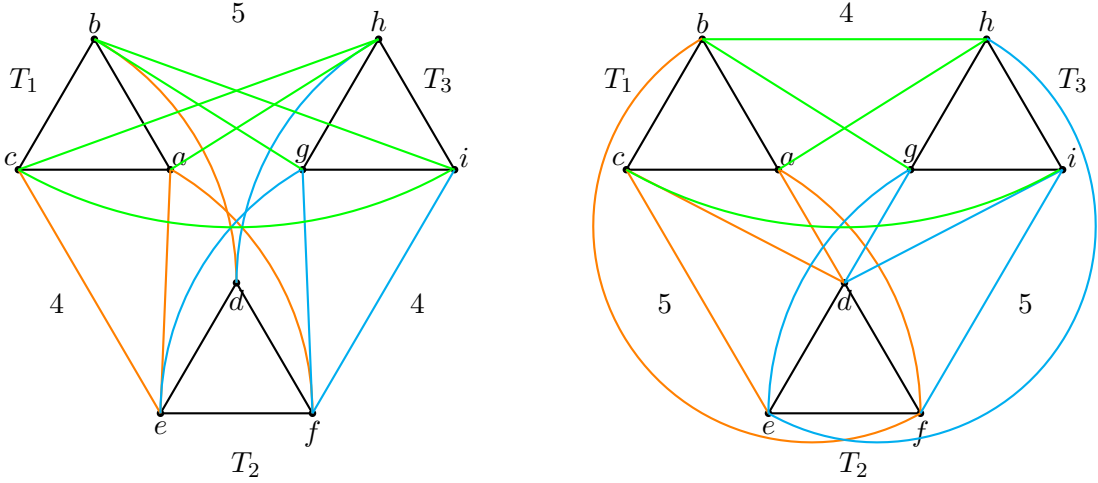
\begin{figure}[ht]\label{544and554}
\begin{center}
\begin{tikzpicture}
\hspace{-4cm}

      \tikzstyle{every node}=[draw,circle,fill=black,minimum size=2.0pt, inner sep=0pt]
      \draw [thick](-0.87,0.5) node (1) [label=70:$a$]{};
      \draw [thick](-1.87,2.23) node (2) [label=90:$b$]{};
      \draw [thick](-2.87,0.5) node (3) [label=110:$c$]{};
      \draw [thick](0,-1) node (4) [label=-90:$d$]{};
      \draw [thick](-1,-2.73) node (5) [label=-90:$e$]{};
      \draw [thick](1,-2.73) node (6) [label=-90:$f$]{};
      \draw [thick](0.87,0.5) node (7) [label=90:$g$]{};
      \draw [thick](1.87,2.23) node (8) [label=90:$h$]{};
      \draw [thick](2.87,0.5) node (9) [label=70:$i$]{};
      
      \draw [thick,color=black] (-0.87,0.5) -- (-1.87,2.23); 
      \draw [thick,color=black] (-0.87,0.5) -- (-2.87,0.5); 
      \draw [thick,color=black] (-2.87,0.5) -- (-1.87,2.23); 
      
      \draw [thick,color=black] (0,-1) -- (-1,-2.73); 
      \draw [thick,color=black] (0,-1) -- (1,-2.73); 
      \draw [thick,color=black] (-1,-2.73) -- (1,-2.73); 
     
      \draw [thick,color=black] (0.87,0.5) -- (1.87,2.23); 
      \draw [thick,color=black] (0.87,0.5) -- (2.87,0.5); 
      \draw [thick,color=black] (1.87,2.23) -- (2.87,0.5); 
   
      \draw [thick,color=orange] (-0.87,0.5) -- (-1,-2.73); 
      \draw [draw,thick,color=orange](1,-2.73) arc (0:60:3.74cm); 
      \draw [draw,thick,color=orange](0,-1) arc (0:60:3.73cm); 
      \draw [thick,color=orange] (-2.87,0.5) -- (-1,-2.73); 
      
      \draw [thick,color=cyan] (1,-2.73) -- (0.87,0.5);
      \draw [draw,thick,color=cyan](-1,-2.73) arc (180:120:3.74cm);
      \draw [draw,thick,color=cyan](0,-1) arc (180:120:3.73cm); 
      \draw [thick,color=cyan] (1,-2.73) -- (2.87,0.5); 
    
      \draw [thick,color=green] (-0.87,0.5) -- (1.87,2.23); 
      \draw [thick,color=green] (-1.87,2.23) -- (0.87,0.5); 
      \draw [thick,color=green] (-1.87,2.23) -- (2.87,0.5); 
      \draw [thick,color=green] (-2.87,0.5) -- (1.87,2.23); 
      \draw [draw,thick,color=green](-2.87,0.5) arc (-120:-60:5.7cm); 

      \put(-2,70){$5$}
      \put(-70,-40){$4$}
      \put(65,-40){$4$}

      \put(-85,45){$T_1$}
      \put(-2,-100){$T_2$}
      \put(70,45){$T_3$}
\hspace{8cm}

      \tikzstyle{every node}=[draw,circle,fill=black,minimum size=2.0pt, inner sep=0pt]
      \draw [thick](-0.87,0.5) node (1) [label=70:$a$]{};
      \draw [thick](-1.87,2.23) node (2) [label=90:$b$]{};
      \draw [thick](-2.87,0.5) node (3) [label=110:$c$]{};
      \draw [thick](0,-1) node (4) [label=-90:$d$]{};
      \draw [thick](-1,-2.73) node (5) [label=-90:$e$]{};
      \draw [thick](1,-2.73) node (6) [label=-90:$f$]{};
      \draw [thick](0.87,0.5) node (7) [label=90:$g$]{};
      \draw [thick](1.87,2.23) node (8) [label=90:$h$]{};
      \draw [thick](2.87,0.5) node (9) [label=70:$i$]{};
      
      \draw [thick,color=black] (-0.87,0.5) -- (-1.87,2.23); 
      \draw [thick,color=black] (-0.87,0.5) -- (-2.87,0.5);
      \draw [thick,color=black] (-2.87,0.5) -- (-1.87,2.23); 
      
      \draw [thick,color=black] (0,-1) -- (-1,-2.73); 
      \draw [thick,color=black] (0,-1) -- (1,-2.73); 
      \draw [thick,color=black] (-1,-2.73) -- (1,-2.73); 
     
      \draw [thick,color=black] (0.87,0.5) -- (1.87,2.23);
      \draw [thick,color=black] (0.87,0.5) -- (2.87,0.5);
      \draw [thick,color=black] (1.87,2.23) -- (2.87,0.5); 
      
      \draw [thick,color=orange] (-0.87,0.5) -- (0,-1); 
      \draw [thick,color=orange] (-2.87,0.5) -- (0,-1); 
      \draw [thick,color=orange] (-2.87,0.5) -- (-1,-2.73);
      \draw [draw,thick,color=orange](1,-2.73) arc (0:60:3.74cm); 
      \draw [draw,thick,color=orange](1,-2.73) arc (-60:-240:2.87cm); 
      
      \draw [thick,color=cyan] (0,-1) -- (0.87,0.5); 
      \draw [thick,color=cyan] (0,-1) -- (2.87,0.5); 
      \draw [thick,color=cyan] (1,-2.73) -- (2.87,0.5);
      \draw [draw,thick,color=cyan](-1,-2.73) arc (180:120:3.74cm); 
      \draw [draw,thick,color=cyan](-1,-2.73) arc (-120:60:2.87cm);
    
      \draw [thick,color=green] (-0.87,0.5) -- (1.87,2.23); 
      \draw [thick,color=green] (-1.87,2.23) -- (0.87,0.5); 
      \draw [thick,color=green] (-1.87,2.23) -- (1.87,2.23); 
      \draw [draw,thick,color=green](-2.87,0.5) arc (-120:-60:5.7cm);

      \put(-2,70){$4$}
      \put(-70,-40){$5$}
      \put(65,-40){$5$}

      \put(-90,45){$T_1$}
      \put(-2,-100){$T_2$}
      \put(80,45){$T_3$}
\end{tikzpicture}
\vspace{0.3cm}
\caption{Counterexamples $F_1$ (left) and $F_2$ (right) to Conjecture~\ref{Conj1.2}.}
\end{center}
\end{figure}

We define two graphs $F_1$ and $F_2$ as follows. 
Let $T_1=\{a,b,c\}$, $T_2=\{d,e,f\}$, and $T_3=\{g,h,i\}$ be three disjoint triangles.
\begin{itemize}
    \item [-] \textbf{The graph $F_1$}: Let $V(F_1)=T_1\cup T_2\cup T_3$ and $E(F_1)$ consists of 4 edges between $T_1$ and $T_2$, 4 edges between $T_2$ and $T_3$, and 5 edges between $T_1$ and $T_3$ (see Figure 1, on left). 
\end{itemize}

\noindent We see $P_1 = \{T_1,T_2,T_3\}$ defines a greedy partition of $F_1$. 
It is also easy to see that
$v(F_1) = 9, e(F_1) = 22, r(P_1) = 3$.
Thus this yields 
$$12=r(P_1)(e(F_1)-r(P_1)(v(F_1)-r(P_1))) >t(F_1) = 11.\footnote{There are 11 triangles in $F_1$: abc, ace, ach, aef, bci, bgi, chi, def, efg, fgi, ghi.}$$ 
Since any 4 vertices induce at most $2$ triangles, $F_1$ is $K_4$-free.
\begin{itemize}
    \item [-] \textbf{The graph $F_2$}: Let $V(F_2)=T_1\cup T_2\cup T_3$ and $E(F_2)$ consists of 5 edges between $T_1$ and $T_2$, 5 edges between $T_2$ and $T_3$, and 4 edges between $T_1$ and $T_3$ (see Figure 1, on right). 
\end{itemize}    

\noindent We see $P_2 = \{T_1,T_2,T_3\}$ defines a greedy partition of $F_2$. 
It is also easy to see that
$v(F_2)= 9, e(F_2) = 23,  r(P_2) = 3$.
Thus this yields        
$$15= r(P_2)(e(F_2)-r(P_2)(v(F_2)-r(P_2))) > t(F_2) =14.\footnote{There are 14 triangles in $F_2$:  abc, abf, abh, acd, adf, bgh, cde, cdi, def, deg, dfi, dgi, egh, ghi.}$$
Since any 4 vertices induce at most $2$ triangles, $F_2$ is also $K_4$-free.

\medskip

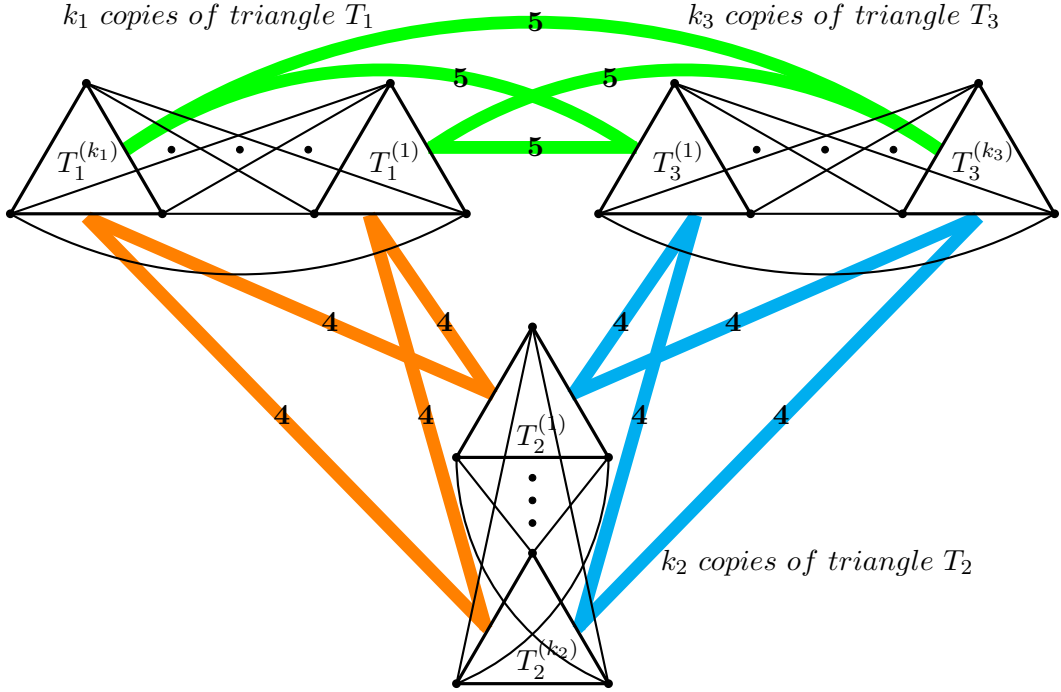
\begin{figure}[ht]\label{k1k2k3}
\begin{center}
\begin{tikzpicture}
 \tikzstyle{every node}=[draw,circle,fill=black,minimum size=2.5pt, inner sep=0pt]
      \draw [thick](-0.87,0.5) node (1) {};
      \draw [thick](-1.87,2.23) node (2) {};
      \draw [thick](-2.87,0.5) node (3) {};
      \draw [thick](0,-1) node (4) {};
      \draw [thick](-1,-2.73) node (5) {};
      \draw [thick](1,-2.73) node (6) {};
      \draw [thick](0.87,0.5) node (7) {};
      \draw [thick](1.87,2.23) node (8) {};
      \draw [thick](2.87,0.5) node (9) {};

      \draw [thick](-4.87,0.5) node (1') {};
      \draw [thick](-5.87,2.23) node (2') {};
      \draw [thick](-6.87,0.5) node (3') {};
      \draw [thick](0,-4) node (4') {}; 
      \draw [thick](-1,-5.73) node (5') {};
      \draw [thick](1,-5.73) node (6') {};
      \draw [thick](4.87,0.5) node (7') {};
      \draw [thick](5.87,2.23) node (8') {};
      \draw [thick](6.87,0.5) node (9') {};
      
      \draw (-2.95,1.35) node (a) {};
      \draw (-3.85,1.35) node (a) {};
      \draw (-4.75,1.35) node (a) {};
      \draw (2.95,1.35) node (b) {};
      \draw (3.85,1.35) node (b) {};
      \draw (4.75,1.35) node (b) {};
      \draw (0,-3) node (c) {};
      \draw (0,-3.3) node (c) {};
      \draw (0,-3.6) node (c) {};

      \draw [line width=1.7mm,color=orange] (-5.87,0.455) -- (-0.585,-5);
      \draw [line width=1.7mm,color=orange] (-5.85,0.45) -- (-0.5,-1.9);
      \draw [line width=1.7mm,color=orange] (-2.15,0.48) -- (-0.53,-1.9);
      \draw [line width=1.7mm,color=orange] (-2.15,0.48) -- (-0.61,-5);

      \draw [line width=1.7mm,color=cyan] (5.87,0.455) -- (0.585,-5);
      \draw [line width=1.7mm,color=cyan] (5.85,0.45) -- (0.5,-1.9);
      \draw [line width=1.7mm,color=cyan] (2.15,0.48) -- (0.53,-1.9);
      \draw [line width=1.7mm,color=cyan] (2.15,0.48) -- (0.61,-5);                  

      \draw [line width=1.7mm,color=green] (-1.37,1.38) -- (1.37,1.38);
      \draw [line width=1.7mm,color=green] (-1.37,1.35) arc (125:55:5.85cm);
      \draw [line width=1.7mm,color=green] (1.37,1.35) arc (55:125:5.85cm);
      \draw [line width=1.7mm,color=green] (-5.37,1.35) arc (125:55:9.35cm);

      \draw [very thick,color=black] (-0.87,0.5) -- (-1.87,2.23); 
      \draw [very thick,color=black] (-0.87,0.5) -- (-2.87,0.5); 
      \draw [very thick,color=black] (-2.87,0.5) -- (-1.87,2.23); 

      \draw [very thick,color=black] (0,-1) -- (-1,-2.73);
      \draw [very thick,color=black] (0,-1) -- (1,-2.73);
      \draw [very thick,color=black] (-1,-2.73) -- (1,-2.73); 

      \draw [very thick,color=black] (0.87,0.5) -- (1.87,2.23); 
      \draw [very thick,color=black] (0.87,0.5) -- (2.87,0.5); 
      \draw [very thick,color=black] (1.87,2.23) -- (2.87,0.5); 

      \draw [very thick,color=black] (-4.87,0.5) -- (-5.87,2.23);
      \draw [very thick,color=black] (-4.87,0.5) -- (-6.87,0.5); 
      \draw [very thick,color=black] (-6.87,0.5) -- (-5.87,2.23);

      \draw [very thick,color=black] (0,-4) -- (-1,-5.73);
      \draw [very thick,color=black] (0,-4) -- (1,-5.73);
      \draw [very thick,color=black] (-1,-5.73) -- (1,-5.73); 

      \draw [very thick,color=black] (4.87,0.5) -- (5.87,2.23); 
      \draw [very thick,color=black] (4.87,0.5) -- (6.87,0.5);
      \draw [very thick,color=black] (5.87,2.23) -- (6.87,0.5); 
      
      \draw [thick,color=black] (-0.87,0.5) -- (-5.87,2.23); 
      \draw [draw,thick,color=black](-0.87,0.5) arc (-60:-120:6cm);
      \draw [thick,color=black] (-1.87,2.23) -- (-4.87,0.5);
      \draw [thick,color=black] (-1.87,2.23) -- (-6.87,0.5);
      \draw [thick,color=black] (-2.87,0.5) -- (-4.87,0.5);
      \draw [thick,color=black] (-2.87,0.5) -- (-5.87,2.23);

      \draw [thick,color=black] (0,-1) -- (-1,-5.73); 
      \draw [thick,color=black] (0,-1) -- (1,-5.73);
      \draw [thick,color=black] (-1,-2.73) -- (0,-4);
      \draw [draw,thick,color=black](-1,-2.73) arc (180:248:3.25cm);
      \draw [thick,color=black] (1,-2.73) -- (0,-4);
      \draw [draw,thick,color=black](1,-2.73) arc (0:-68:3.25cm);
 
      \draw [thick,color=black] (0.87,0.5) -- (5.87,2.23); 
      \draw [draw,thick,color=black](6.87,0.5) arc (-60:-120:6cm);
      \draw [thick,color=black] (1.87,2.23) -- (4.87,0.5);
      \draw [thick,color=black] (1.87,2.23) -- (6.87,0.5);
      \draw [thick,color=black] (2.87,0.5) -- (4.87,0.5);
      \draw [thick,color=black] (2.87,0.5) -- (5.87,2.23);

      \put(-176,86){$k_1\,\,copies\,\,of\,\, triangle\,\, T_1$}
      \put(48,-120){$k_2\,\,copies\,\,of\,\, triangle\,\, T_2$}
      \put(58,86){$k_3\,\,copies\,\,of\,\, triangle\,\, T_3$}

      \put(-2,83){$\bf 5$}
      \put(-2,36){$\bf 5$}
      \put(-30,62){$\bf 5$}
      \put(26,62){$\bf 5$}
      
      \put(-36,-30){$\bf 4$}
      \put(-79,-30){$\bf 4$}
      \put(-97,-65){$\bf 4$}
      \put(-43,-65){$\bf 4$}

      \put(30,-30){$\bf 4$}
      \put(72,-30){$\bf 4$}
      \put(37,-65){$\bf 4$}
      \put(90,-65){$\bf 4$}

      \put(-61,30){$T_1^{(1)}$}
      \put(-6,-73){$T_2^{(1)}$}
      \put(45,30){$T_3^{(1)}$}

      \put(-178,30){$T_1^{(k_1)}$}
      \put(-6,-158){$T_2^{(k_2)}$}
      \put(157,30){$T_3^{(k_3)}$}
\end{tikzpicture}
\vspace{0.3cm}
\caption{Graph $F_1^{\mathbf{k}}$ with the greedy partition $P_1^{\mathbf{k}}$.}
\end{center}
\end{figure}

\begin{proof}[Proof of Theorem~\ref{Thm: Counter egs} (Type I)]
We will construct infinitely many counterexamples by blowing up $F_1$ and $F_2$.
For a positive integer vector $\mathbf{k}=(k_1,k_2,k_3)\in \mathbb{Z}_+^3$, for $i\in\{1,2\}$ and $j\in\{1,2,3\},$
the $\mathbf{k}$-blow-up of $F_i$, denoted by $F_i^{\mathbf{k}}$, is the graph
obtained by replacing every vertex $v$ of $T_j$ with $k_j$ different vertices where a copy of $u$ is adjacent to a copy of $v$ in $F_i^{\mathbf{k}}$ if and only if $u$ is adjacent to $v$ in $F_i$ (see Figure 2 for $F_1^{\mathbf{k}}$).
Note that the blow-up graph $F_i^{\mathbf{k}}$ is also  $K_4$-free.
We denote the $k_j$ copies of $T_j$ by $T_j^{(1)}$, $\dots$, $T_j^{(k_j)}$ for $j\in\{1,2,3\}$.
We see that $P_i^{\mathbf{k}} =\{T_1^{(1)},  \dots, T_1^{(k_1)}, T_2^{(1)}, \dots, T_2^{(k_2)}, T_3^{(1)}, \dots, T_3^{(k_3)}\}$ is a greedy partition of $F_i^{\mathbf{k}}$ for $i\in\{1,2\}.$
It is not hard to see that
$v(F_1^{\mathbf{k}})= 3(k_1 + k_2 + k_3), 
e(F_1^{\mathbf{k}}) = 3(k_1^2 + k_2^2 + k_3^2) + 4k_1k_2 + 5k_1k_3 + 4k_2k_3, 
r(P_1^{\mathbf{k}}) = k_1 + k_2 + k_3$ and 
$t(F_1^{\mathbf{k}})= k_1^3 + k_2^3 + k_3^3 + k_1^2k_2 + 2k_1^2k_3 + k_2^2k_1 + k_2^2k_3  + 2k_3^2k_1 + k_3^2k_2.$
Thus this yields    
$$t(F_1^{\mathbf{k}}) - r(P_1^{\mathbf{k}})(e(F_1^{\mathbf{k}}) - r(P_1^{\mathbf{k}})(v(F_1^{\mathbf{k}}) - r(P_1^{\mathbf{k}}))) = -k_1k_2k_3 < 0.$$

Similarly, it is also not hard to see that
$v(F_2^{\mathbf{k}})= 3(k_1 + k_2 + k_3), 
e(F_2^{\mathbf{k}}) = 3(k_1^2 + k_2^2 + k_3^2) + 5k_1k_2 + 4k_1k_3 + 5k_2k_3,  
r(P_2^{\mathbf{k}}) = k_1 + k_2 + k_3$ and 
$t(F_2^{\mathbf{k}})= k_1^3 + k_2^3 + k_3^3 + 2k_1^2k_2 + k_1^2k_3 + 2k_2^2k_1 + 2k_2^2k_3 + k_3^2k_1 + 2k_3^2k_2 + k_1k_2k_3.$
Thus this yields    
$$t(F_2^{\mathbf{k}}) - r(P_2^{\mathbf{k}})(e(F_2^{\mathbf{k}}) - r(P_2^{\mathbf{k}})(v(F_2^{\mathbf{k}}) - r(P_2^{\mathbf{k}}))) = -k_1k_2k_3 < 0.$$
Therefore the graphs $F_1^{\mathbf{k}}$ with greedy partition $P_1^{\mathbf{k}}$ and $F_2^{\mathbf{k}}$ with greedy partition $P_2^{\mathbf{k}}$ are counterexamples to Conjecture~\ref{Conj1.2}.
Moreover, as $n=3(k_1+k_2+k_3),$ the discrepancy $r(e - r(n - r))-t=k_1k_2k_3$ approaches infinity as $n\to \infty$, which completes the proof of Theorem~\ref{Thm: Counter egs}.
\end{proof}

\subsection{Counterexamples of Type II}\label{subsec:type2}
We define two graphs $F_3$ and $F_4$ as follows.
Let $T_1=\{a,b,c\}$, $T_2=\{d,e,f\}$, and $T_3=\{g,h,i\}$ be three disjoint triangles.
\begin{itemize}
    \item [-] \textbf{The graph $F_3$}: Let $V(F_3)=T_1\cup T_2\cup T_3$ and $E(F_3)$ consists of 5 edges between $T_1$ and $T_2$, 5 edges between $T_2$ and $T_3$, and 3 edges between $T_1$ and $T_3$ (see Figure 3, on left).    
\end{itemize}
It is easy to see that $v(F_3)=9,$ $e(F_3)=22$ and $t(F_3)=13.\footnote{There are 13 triangles in $F_3$: abc, abf, abh, acd, adf, bgh, cde, def, deg, dfi, dgi, egh, ghi.}$ 
Since any 4 vertices induce at most 2 triangles, $F_3$ is $K_4$-free.
\medskip

Let $T_1'=\{a,b,c\}$ and $T_3'=\{f,g,h\}$ be two disjoint triangles and $T_2'=\{d,e\}$ be an edge.
\begin{itemize}
    \item [-] \textbf{The graph $F_4$}: Let $V(F_4)=T'_1\cup T'_2\cup T'_3$ and 
    $E(F_4)$ consists of 3 edges between $T'_1$ and $T'_2$, 3 edges between $T'_2$ and $T'_3$, and 5 edges between $T'_1$ and $T'_3$ (see Figure 3, on right).
\end{itemize}
It is easy to see that $v(F_4)=8,$ $e(F_4)=18$ and $t(F_4)=10.\footnote{There are 10 triangles in $F_4$: abc,  acd, acg,  ade, bch, bfh, cgh, def, efh, fgh.}$ 
Since any 4 vertices induce at most 2 triangles, $F_4$ is also $K_4$-free.

\vspace{0.5cm}
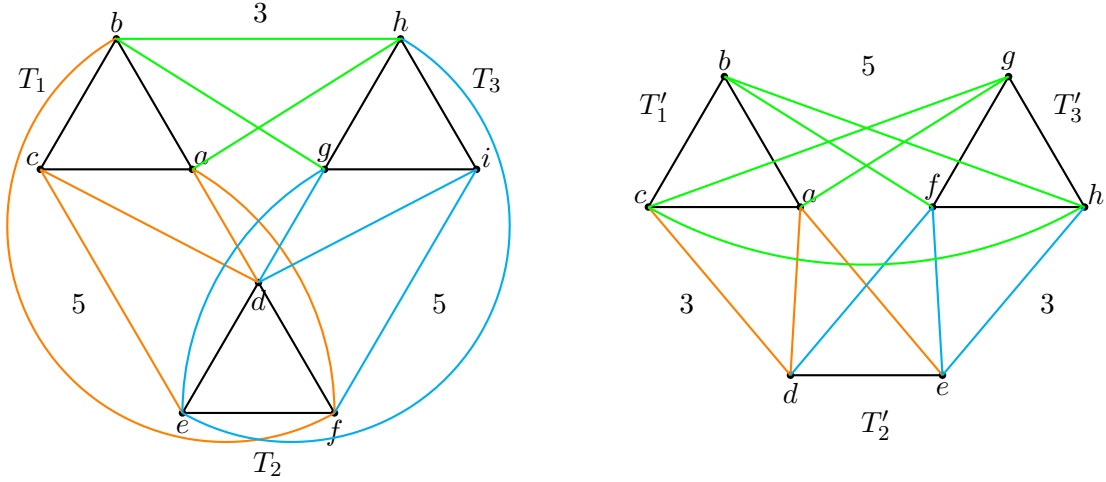
\begin{figure}[ht]\label{553}
\begin{center}
\begin{tikzpicture}
\hspace{-4cm}
\vspace{0.1cm}
      \tikzstyle{every node}=[draw,circle,fill=black,minimum size=2.0pt, inner sep=0pt]
      \draw [thick](-0.87,0.5) node (1) [label=70:$a$]{};
      \draw [thick](-1.87,2.23) node (2) [label=90:$b$]{};
      \draw [thick](-2.87,0.5) node (3) [label=110:$c$]{};
      \draw [thick](0,-1) node (4) [label=-90:$d$]{};
      \draw [thick](-1,-2.73) node (5) [label=-90:$e$]{};
      \draw [thick](1,-2.73) node (6) [label=-90:$f$]{};
      \draw [thick](0.87,0.5) node (7) [label=90:$g$]{};
      \draw [thick](1.87,2.23) node (8) [label=90:$h$]{};
      \draw [thick](2.87,0.5) node (9) [label=70:$i$]{};
      
      \draw [thick,color=black] (-0.87,0.5) -- (-1.87,2.23); 
      \draw [thick,color=black] (-0.87,0.5) -- (-2.87,0.5);
      \draw [thick,color=black] (-2.87,0.5) -- (-1.87,2.23); 
    
      \draw [thick,color=black] (0,-1) -- (-1,-2.73);
      \draw [thick,color=black] (0,-1) -- (1,-2.73); 
      \draw [thick,color=black] (-1,-2.73) -- (1,-2.73); 
    
      \draw [thick,color=black] (0.87,0.5) -- (1.87,2.23); 
      \draw [thick,color=black] (0.87,0.5) -- (2.87,0.5);
      \draw [thick,color=black] (1.87,2.23) -- (2.87,0.5); 

      \draw [thick,color=orange] (-0.87,0.5) -- (0,-1); 
      \draw [thick,color=orange] (-2.87,0.5) -- (0,-1); 
      \draw [thick,color=orange] (-2.87,0.5) -- (-1,-2.73);
      \draw [draw,thick,color=orange](1,-2.73) arc (0:60:3.74cm); 
      \draw [draw,thick,color=orange](1,-2.73) arc (-60:-240:2.87cm); 

      \draw [thick,color=cyan] (0,-1) -- (0.87,0.5); 
      \draw [thick,color=cyan] (0,-1) -- (2.87,0.5);
      \draw [thick,color=cyan] (1,-2.73) -- (2.87,0.5); 
      \draw [draw,thick,color=cyan](-1,-2.73) arc (180:120:3.74cm); 
      \draw [draw,thick,color=cyan](-1,-2.73) arc (-120:60:2.87cm); 
    
      \draw [thick,color=green] (-0.87,0.5) -- (1.87,2.23); 
      \draw [thick,color=green] (-1.87,2.23) -- (0.87,0.5); 
      \draw [thick,color=green] (-1.87,2.23) -- (1.87,2.23); 

      \put(-2,70){$3$}
      \put(-70,-40){$5$}
      \put(65,-40){$5$}

      \put(-90,45){$T_1$}
      \put(-2,-100){$T_2$}
      \put(80,45){$T_3$}
\hspace{8cm}

      \tikzstyle{every node}=[draw,circle,fill=black,minimum size=2.0pt, inner sep=0pt]
      \draw [thick](-0.87,0) node (1) [label=70:$a$]{};
      \draw [thick](-1.87,1.73) node (2) [label=90:$b$]{};
      \draw [thick](-2.87,0) node (3) [label=110:$c$]{};
      
      \draw [thick](-1,-2.23) node (5) [label=-90:$d$]{};
      \draw [thick](1,-2.23) node (6) [label=-90:$e$]{};
      \draw [thick](0.87,0) node (7) [label=90:$f$]{};
      \draw [thick](1.87,1.73) node (8) [label=90:$g$]{};
      \draw [thick](2.87,0) node (9) [label=70:$h$]{};
      
      \draw [thick,color=black] (-0.87,0) -- (-1.87,1.73); 
      \draw [thick,color=black] (-0.87,0) -- (-2.87,0);
      \draw [thick,color=black] (-2.87,0) -- (-1.87,1.73); 
      
      \draw [thick,color=black] (-1,-2.23) -- (1,-2.23); 
  
      \draw [thick,color=black] (0.87,0) -- (1.87,1.73); 
      \draw [thick,color=black] (0.87,0) -- (2.87,0); 
      \draw [thick,color=black] (1.87,1.73) -- (2.87,0); 

      \draw [thick,color=orange] (-0.87,0) -- (-1,-2.23); 
      \draw [thick,color=orange] (-0.87,0) -- (1,-2.23);
   
      \draw [thick,color=orange] (-2.87,0) -- (-1,-2.23); 
    
      \draw [thick,color=cyan] (1,-2.23) -- (0.87,0); 
      \draw [thick,color=cyan] (-1,-2.23) -- (0.87,0); 
    
      \draw [thick,color=cyan] (1,-2.23) -- (2.87,0); 

      \draw [thick,color=green] (-0.87,0) -- (1.87,1.73); 
      \draw [thick,color=green] (-1.87,1.73) -- (0.87,0); 
      \draw [thick,color=green] (-1.87,1.73) -- (2.87,0); 
      \draw [thick,color=green] (-2.87,0) -- (1.87,1.73);
      \draw [draw,thick,color=green](-2.87,0) arc (-120:-60:5.7cm); 

      \put(-2,50){$5$}
      \put(-70,-40){$3$}
      \put(65,-40){$3$}

      \put(-85,35){$T'_1$}
      \put(-2,-85){$T'_2$}
      \put(70,35){$T'_3$}

\end{tikzpicture}
\vspace{0.3cm}
\caption{Graphs $F_3$ (left) and $F_4$ (right).}
\end{center}
\end{figure}
\vspace{-0.5cm}

\begin{proof}[Proof of Theorem~\ref{Thm: Counter egs} (Type II)]
While $F_3$ and $F_4$ are not counterexamples to Conjecture~\ref{Conj1.2}, some blow-ups of these graphs are.
Indeed, by a similar proof of Theorem~\ref{Thm: Counter egs} (Type I),
for a positive integer vector $\mathbf{k}=(k_1,k_2,k_3)\in \mathbb{Z}_+^3$, for $j\in\{1,2,3\},$
we define $F_3^{\mathbf{k}}$ (respectively, $F_4^{\mathbf{k}}$) to be the graph
obtained by replacing every vertex $v$ of $T_j$ (respectively, $T'_j$) with $k_j$ different vertices.
We denote the $k_j$ copies of $T_j$ by $T_j^{(1)}$, $\dots$, $T_j^{(k_j)}$ for $j\in\{1,2,3\}$.
We see that $P_3^{\mathbf{k}} =\{T_1^{(1)},  \dots, T_1^{(k_1)}, T_2^{(1)}, \dots, T_2^{(k_2)}, T_3^{(1)}, \dots, T_3^{(k_3)}\}$ is a greedy partition of $F_3^{\mathbf{k}}$.
It is not hard to see that 
$v(F_3^{\mathbf{k}})= 3(k_1+k_2+k_3),$
$e(F_3^{\mathbf{k}})= 3(k_1^2 + k_2^2 + k_3^2) + 5k_1k_2 + 3k_1k_3 + 5k_2k_3,$
$r(P_3^{\mathbf{k}})= k_1+k_2+k_3,$ and
$t(F_3^{\mathbf{k}})= k_1^3 + k_2^3 + k_3^3 + 2k_1^2k_2 + k_1^2k_3 + 2k_2^2k_1 + 2k_2^2k_3  + k_3^2k_1 + 2k_3^2k_2.$
Thus this yields
$$t(F_3^{\mathbf{k}}) - r(P_3^{\mathbf{k}})(e(F_3^{\mathbf{k}}) - r(P_3^{\mathbf{k}})(v(F_3^{\mathbf{k}}) - r(P_3^{\mathbf{k}}))) = k_1k_3(k_1 - k_2 + k_3).$$
Thus for infinitely many $(k_1,k_2,k_3)$ satisfying the inequality $k_1 + k_3 < k_2$, 
the graph $F_3^{\mathbf{k}}$ with greedy partition $P_3^{\mathbf{k}}$ is a counterexample to Conjecture~\ref{Conj1.2}. 
Moreover, as $n=3(k_1+k_2+k_3)$, the discrepancy $r(e - r(n - r))-t= k_1k_3(k_2 -k_1 - k_3)$ may approach infinity as $n\to \infty$. 

Similarly, we denote the $k_j$ copies of $T'_j$ by $T_j^{'(1)}$, $\dots$, $T_j^{'(k_j)}$ for $j\in\{1,2,3\}$.
We see that $P_4^{\mathbf{k}} =\{T_1^{'(1)},  \dots, T_1^{'(k_1)}, T_2^{'(1)}, \dots, T_2^{'(k_2)}, T_3^{'(1)}, \dots, T_3^{'(k_3)}\}$ defines a greedy partition of $F_4^{\mathbf{k}}$.
It is not hard to see that 
$v(F_4^{\mathbf{k}})= 3k_1 + 2k_2 + 3k_3,$
$e(F_4^{\mathbf{k}})= 3k_1^2 + k_2^2 + 3k_3^2 + 3k_1k_2 + 5k_1k_3 + 3k_2k_3,$
$r(P_4^{\mathbf{k}})= k_1+k_2+k_3,$ and
$t(F_4^{\mathbf{k}})= k_1^3 + k_3^3 + k_1^2k_2 + 2k_1^2k_3 + k_2^2k_1 + k_2^2k_3 + 2k_3^2k_1 + k_3^2k_2.$
Thus this yields
$$t(F_4^{\mathbf{k}}) - r(P_4^{\mathbf{k}})(e(F_4^{\mathbf{k}}) - r(P_4^{\mathbf{k}})(v(F_4^{\mathbf{k}}) - r(P_4^{\mathbf{k}}))) = k_2(k_1k_2 - k_1k_3 + k_2k_3).$$
Thus for infinitely many $(k_1,k_2,k_3)$ satisfying the inequality $k_1k_2 + k_2k_3 < k_1k_3$, 
the graph $F_4^{\mathbf{k}}$ with greedy partition $P_4^{\mathbf{k}}$ is a counterexample to Conjecture~\ref{Conj1.2}.
Moreover, as $n=3k_1+2k_2+3k_3$, the discrepancy $r(e - r(n - r))-t= k_2(k_1k_3- k_1k_2 - k_2k_3)$  may approach infinity as $n\to \infty$, which completes the proof of Theorem~\ref{Thm: Counter egs}.
\end{proof}

\section{Proof of Theorem~\ref{Thm:Main Thm}}\label{Sec:proof of main thm}

In this section, we first reduce the proof of Theorem~\ref{Thm:Main Thm} to a key lemma (Lemma~\ref{Lem:KeyLem}) in Subsection~\ref{Sec:proof of Thm}, and then prove Lemma~\ref{Lem:KeyLem} in Subsection~\ref{Sec:proof of Key lemma}.  
A proof outline of Theorem~\ref{Thm:Main Thm} is also provided in Subsection~\ref{Sec:proof of Thm}.

Throughout the rest of this section,
let $G$ be an $n$-vertex $K_4$-free graph with $e$ edges, and
let $P=\{T_1,...,T_r\}$ be a greedy partition of $G$ of size $r$.
Since $G$ is $K_4$-free,
each $T_i$ has size $3,2$ or $1.$
Let $a,b,c$ be the number of cliques of size $3,2$ and $1$, respectively.
Thus $r=a+b+c,$ and $n=3a+2b+c.$
\begin{itemize}
\item For $1\leq i<j\leq r$, we define
$$e_{ij} = e(G[T_i \cup T_j])\quad \text{and}\quad  t_{ij}= \text{the number of triangles in}\  G[T_i \cup T_j].$$
\item For $1\leq i<j<k \leq r$, we define
$$e_{ijk} = e(G[T_i \cup T_j \cup T_k])\quad \text{and}\quad t_{ijk} = \text{the number of triangles in}\  G[T_i \cup T_j \cup T_k].$$
\end{itemize}

\subsection{Completing the Proof, Assuming Lemma~\ref{Lem:KeyLem}}\label{Sec:proof of Thm}

Our proof strategy of Theorem~\ref{Thm:Main Thm} employs double counting technique to analyze the contribution of triangles. 
Specifically, by double counting the number of triangles that contribute to $t_{ijk}$, 
we can write $t$ as a sum of these $t_{ijk}$ terms and a term related to $t_{ij}$ (see equation~\eqref{Equ:t(G)}).
Next, in Lemma~\ref{Lem:KeyLem}, we analyze how $t_{ij}$ affects $t_{ijk}$ locally, 
then apply the induction to extend the effect to the whole graph, 
and thus obtain the desired lower bound on the number of triangles.

For $i\in[3],$ let $M_i(G;P)$ be the number of triangles with three vertices lying in exactly $i$ different $T_j$'s.
Clearly, we have $M_1(G;P)=a,$ and $t(G)=M_1(G;P)+M_2(G;P)+M_3(G;P)$.
By double counting the number of triangles, we have
\begin{align*}
    \sum_{1\leq i<j<k \leq r}t_{ijk}&= M_1(G;P)\cdot \binom{r-1}{2}+M_2(G;P)\cdot \binom{r-2}{1}+M_3(G;P)\\
    &=t(G)+(r-3)M_2(G;P)+\frac{a}{2}(r-1)(r-2)-a,
\end{align*}
which implies that
\begin{align}\label{Equ:t(G)}
    t(G)=\sum_{1\leq i<j<k \leq r}t_{ijk}-(r-3)M_2(G;P)-\frac{a}{2}(r-1)(r-2)+a.
\end{align}

To lower bound the right-hand side of~\eqref{Equ:t(G)}, we first estimate $M_2(G;P)$. 
We define the pair deficiency
\begin{align}\label{Equ:defM0}
     M_0(G;P):=\sum_{1\leq i<j \leq r}2(e_{ij}-2(|T_i|+|T_j|-2))-a(r-1).
\end{align}

The following lemma states that $M_2(G;P)$ is bounded below by the pair deficiency $M_0(G;P).$

\begin{lem}\label{Lem:M2>M0}
    Let $G$ be a $K_4$-free graph with greedy partition $P$.
    Then we have $M_2(G;P)\ge M_0(G;P).$
\end{lem}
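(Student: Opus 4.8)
The plan is to prove this by a local (pairwise) analysis combined with a clean double-counting identity. First I would rewrite $M_2(G;P)$ itself as a sum over pairs: a triangle counted by $M_2(G;P)$ has its three vertices in exactly two parts $T_i,T_j$, so it is counted by exactly one $t_{ij}$ with $\{i,j\}$ the unordered pair of parts it meets, but one must be careful because a triangle inside a single $T_i$ that has an extra vertex... no — more precisely, $t_{ij}$ counts \emph{all} triangles inside $G[T_i\cup T_j]$, which includes the (at most two, since $|T_i|,|T_j|\le 3$) triangles lying wholly inside $T_i$ or inside $T_j$. So if $a_{ij}\in\{0,1\}$ denotes whether $T_i$ is itself a triangle and similarly $a_{ji}$ for $T_j$, then the number of $M_2$-type triangles supported on the pair $\{i,j\}$ is $t_{ij}-a_{ij}-a_{ji}$, and summing over all pairs gives
\[
M_2(G;P)=\sum_{1\le i<j\le r}\bigl(t_{ij}-a_{ij}-a_{ji}\bigr)=\sum_{1\le i<j\le r} t_{ij}-a(r-1),
\]
since each of the $a$ triangle-parts is counted once for each of the other $r-1$ parts. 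Comparing with the definition \eqref{Equ:defM0} of $M_0(G;P)$, the lemma reduces to the purely local statement
\[
t_{ij}\ \ge\ 2\bigl(e_{ij}-2(|T_i|+|T_j|-2)\bigr)\qquad\text{for every pair }1\le i<j\le r.
\]

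The core of the proof is thus this inequality for a single pair, which I would establish by case analysis on the sizes $(|T_i|,|T_j|)\in\{3,2,1\}^2$. Write $s=|T_i|+|T_j|$ and $\beta$ for the number of cross-edges between $T_i$ and $T_j$, so $e_{ij}=\beta+(\text{internal edges of }T_i,T_j)$; the right-hand side becomes $2\bigl(\beta+e_{\mathrm{int}}-2s+4\bigr)$. Each case is finite: the hardest is $(3,3)$, where $G[T_i\cup T_j]$ is $K_4$-free, $\beta\in\{0,\dots,9\}$ (but $\beta=9$ would give $K_4$, and in fact $\beta\le 6$ by $K_4$-freeness of this bipartite-plus-two-triangles graph — actually $\beta\le 6$ since a vertex of $T_i$ with $3$ neighbors in $T_j$ together with $T_j$'s triangle forms $K_4$, so each vertex of $T_i$ sends $\le 2$ cross-edges), and one checks $t_{ij}\ge 2\beta-4$: each cross-edge $uv$ with $u\in T_i,v\in T_j$ lies in a triangle using the third... here I would count, for each cross-edge, the triangles through it, and use that $K_4$-freeness forces enough of them. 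The remaining cases $(3,2),(2,2),(3,1),(2,1),(1,1)$ are smaller and direct; in several of them the right-hand side is $\le 0$ (e.g. $(1,1)$ gives $e_{ij}\le 1$, RHS $=2(e_{ij}-0)\le 2$ — wait, one must double check this case forces $t_{ij}=0\ge 2(e_{ij}-2\cdot0)$; since $|T_i|+|T_j|-2=0$ the bound reads $t_{ij}\ge 2e_{ij}$, and with $e_{ij}\le1$ this needs $e_{ij}=0$ — but that is false if there is an edge, so actually the correct reading is $2(|T_i|+|T_j|-2)=2\cdot 0$... I would re-examine whether the intended quantity is $|T_i|+|T_j|-2$ or something forcing nonnegativity, and handle the genuinely small cases by noting the greedy property or direct inspection).

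The main obstacle I anticipate is the $(3,3)$ case: ruling out configurations with many cross-edges but few triangles. The greedy property is essential here — since $T_1$ was a largest clique, every vertex outside it has at most $2$ neighbors inside it, which is exactly what bounds $\beta\le 6$ and more importantly controls the local structure; without it the inequality can fail (indeed $F_1,F_2,F_3,F_4$ are built to exploit exactly the slack in triple interactions, but at the pair level the greedy bound still holds). So the plan is: (i) prove the double-counting identity $M_2=\sum_{i<j}t_{ij}-a(r-1)$; (ii) reduce to the per-pair inequality $t_{ij}\ge 2(e_{ij}-2(|T_i|+|T_j|-2))$; (iii) dispatch the seven size-cases, spending almost all the effort on $(3,3)$ via the greedy degree bound and a count of triangles per cross-edge. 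I would present (iii) as a short table or a few lines per case rather than grinding through each arithmetic verification.
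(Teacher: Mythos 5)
Your proposal follows essentially the same route as the paper: the double-counting identity $M_2(G;P)=\sum_{i<j}t_{ij}-a(r-1)$, reduction to the per-pair inequality $t_{ij}\ge 2\bigl(e_{ij}-2(|T_i|+|T_j|-2)\bigr)$, and a finite case analysis on $(|T_i|,|T_j|)$ (which the paper asserts and omits). The small cases you flagged as worrisome (e.g.\ two singleton parts joined by an edge) are indeed rescued by the greedy-partition property --- the union of the parts of size at most $\ell$ induces a $K_{\ell+1}$-free graph, so two singletons are nonadjacent and any two parts of size at most $2$ span no triangle --- while in the $(3,3)$ case the degree bound $\beta\le 6$ already follows from $K_4$-freeness alone.
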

\begin{proof}
We first claim that for any $1\leq i<j \leq r$, we have $t_{ij}\ge 2(e_{ij}-2(|T_i|+|T_j|-2)).$
Indeed, this follows from case analysis of all possible sizes for $T_i$ and $T_j$ (which can only be 1, 2, or 3 vertices due to the $K_4$-free condition). 
We omit the detail.

Summing up all the $t_{ij}$'s and by~\eqref{Equ:defM0}, we obtain that
$$\sum_{1\leq i<j\leq r} t_{ij}\ge \sum_{1\leq i<j\leq r} 2\left(e_{ij}-2(|T_i|+|T_j|-2)\right)=
     M_0(G;P)+ a(r-1).$$
By the definition of the greedy partition, we know that $G[T_{a+1},...,T_r]$ is triangle-free.
By double counting the number of triangles in $G[T_i\cup T_j],$ we have
$$\sum_{1\leq i<j\leq r} t_{ij}=M_2(G;P)+ a(r-1),$$
and this implies that $M_2(G;P)\geq M_0(G;P),$
which completes the proof of Lemma~\ref{Lem:M2>M0}.
\end{proof}

Next, we give a lower bound on the term $\sum_{1\leq i<j<k \leq r}t_{ijk}$ in~\eqref{Equ:t(G)}.
We define the triple deficiency motivated by the bound suggested in Conjecture~\ref{Conj1.2}
\begin{align}\label{Equ:defF0}
F_0(G;P):=\sum_{1\leq i<j<k \leq r}3(e_{ijk}-3(|T_i|+|T_j|+|T_k|-3)).
\end{align}

We present our key lemma as follows and postpone the proof to Subsection~\ref{Sec:proof of Key lemma}.
    \begin{lem}\label{Lem:KeyLem}
        Let $G$ be a $K_4$-free graph and $P$ be any greedy partition of $G$ of size $r$.
        Let $M_2(G;P)=M_0(G;P)+C$ for some $C\ge 0$,
        then we have $\sum\limits_{1\leq i<j<k \leq r} t_{ijk}\geq F_0(G;P)+ (r-2)\cdot C-\omega (P)$.
    \end{lem}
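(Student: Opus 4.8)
\textbf{Proof strategy for Lemma~\ref{Lem:KeyLem}.}
The plan is to prove the inequality $\sum_{1\le i<j<k\le r} t_{ijk}\ge F_0(G;P)+(r-2)C-\omega(P)$ by induction on the number $r$ of cliques in the greedy partition, peeling off the last clique $T_r$ of the partition. The base case will be a finite check: when $r\le 3$ the sum has at most one term, and one verifies directly (using the $K_4$-freeness and the greedy property, i.e.\ that $G[T_i\cup T_j\cup T_k]$ with small cliques cannot be too dense) that $t_{ijk}\ge 3(e_{ijk}-3(|T_i|+|T_j|+|T_k|-3))+(\text{the correct }C\text{-term})-\omega$, where the $\omega$ correction is exactly what accounts for the four bad graphs $F_1,F_2,F_3,F_4$ and their relevant sub-configurations. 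This is the conceptual heart: one needs a \emph{local} inequality, valid for three cliques, of the shape $t_{ijk}\ge 3(e_{ijk}-3(\sum|T_\ell|-3)) + (\text{local version of }C) - [(i,j,k)\text{ is }P\text{-bad}]$, and this must be established by case analysis over the possible clique sizes $(|T_i|,|T_j|,|T_k|)\in\{1,2,3\}^3$ and the possible edge counts $e_{ij},e_{jk},e_{ik}$ between them. The graphs $F_1,\dots,F_4$ are precisely the configurations where the naive bound (without $-1$) fails, so the definition of $\omega(P)$ is tailored to make the local inequality hold.

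\textbf{Induction step.}
For $r\ge 4$, I would split the triples according to whether they involve $T_r$:
\[
\sum_{1\le i<j<k\le r} t_{ijk} = \sum_{1\le i<j<k\le r-1} t_{ijk} + \sum_{1\le i<j\le r-1} t_{ijr}.
\]
Let $G'=G[T_1\cup\dots\cup T_{r-1}]$ with the induced greedy partition $P'=\{T_1,\dots,T_{r-1}\}$. The first sum is handled by the induction hypothesis applied to $(G',P')$, giving $\sum_{i<j<k\le r-1} t_{ijk}\ge F_0(G';P')+(r-3)C'-\omega(P')$, where $M_2(G';P')=M_0(G';P')+C'$. For the second sum, one uses the local three-clique inequality from the base case for each triple $(i,j,r)$ and sums over $i<j\le r-1$; the bad triples contribute the $-\omega$ terms for triples meeting $T_r$, so that $\omega(P')$ plus these equals $\omega(P)$. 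The remaining bookkeeping is to show that $F_0(G';P')$ plus the contribution of the $e_{ijr}$ terms equals $F_0(G;P)$, and — this is where the factor $(r-2)$ versus $(r-3)$ must be reconciled — that the change $C-C'$ in the deficiency, multiplied by appropriate counts of how $T_r$'s edges enter $M_0$ versus $M_2$, matches $(r-2)C-(r-3)C'$. Concretely, one must track how adding $T_r$ changes both $M_0$ (through the new pairs $(i,r)$, contributing $\sum_i 2(e_{ir}-2(|T_i|+|T_r|-2))$ minus $a'(\text{something})$) and $M_2$ (through triangles meeting $T_r$), and verify the difference propagates correctly through the $(r-2)$ coefficient.

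\textbf{Main obstacle.}
I expect the genuinely hard part to be twofold. First, establishing the sharp local three-clique inequality with the exact $\omega$-correction: the case analysis over clique sizes and inter-clique edge counts is extensive, and one must confirm that $F_1,F_2,F_3,F_4$ (and \emph{only} configurations isomorphic to these) are the ones forcing the $-1$ correction — this is really a finite but delicate optimization, and it is exactly the information extracted from the counterexample constructions in Section~\ref{sec:counterexamples}. Second, the arithmetic reconciliation in the induction step: matching $(r-2)C$ against $(r-3)C'$ plus the local terms requires understanding precisely how the quantity $C=M_2-M_0$ behaves under deletion of $T_r$, i.e.\ $C-C' = (M_2(G;P)-M_2(G';P')) - (M_0(G;P)-M_0(G';P'))$, and showing the surplus edges around $T_r$ are distributed so that each unit of $C$ is "witnessed" across $r-2$ of the triple terms. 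If $C-C'$ can be negative, one must argue it is still controlled; the cleanest route may be to strengthen the induction hypothesis or to first prove the statement when $C=0$ and then add back the surplus uniformly.
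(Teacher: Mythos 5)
Your base case matches the paper's: for $r\le 3$ the statement is a finite check (the paper reduces $r=3$ to the inequality $t(G)\ge F_0(G;P)+M_2(G;P)-M_0(G;P)-\omega(P)$ and verifies it by case analysis plus computer search, with $F_1,\dots,F_4$ emerging as exactly the configurations needing the $-1$ correction). But your induction step diverges from the paper's and, as you yourself flag, contains a genuine gap. Peeling off only $T_r$ forces you to prove that the triples through $T_r$ satisfy $\sum_{i<j\le r-1} t_{ijr}\ge \sum_{i<j}3\bigl(e_{ijr}-3(|T_i|+|T_j|+|T_r|-3)\bigr)+(r-2)C-(r-3)C'-\#\{\text{bad triples containing }T_r\}$, and the term $(r-2)C-(r-3)C'=C+(r-3)(C-C')$ grows linearly in $r$ while everything else in that inequality is local to $T_r$. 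Summing the local three-clique inequality over the $\binom{r-1}{2}$ triples through $T_r$ produces $\sum_{i<j}C_{ijr}^{\mathrm{loc}}$, which has no evident relation to $C+(r-3)(C-C')$; there is no mechanism in your argument that distributes the global surplus $C$ across the triples through one fixed clique. This is not mere bookkeeping left to the reader; it is the point where the asymmetric peeling breaks down.

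The paper avoids this entirely by a symmetric averaging argument: for every $\ell\in[r]$ it deletes $T_\ell$, applies the induction hypothesis to $(G_\ell,P_\ell)$ with its own surplus $C_\ell\ge 0$, and then sums the resulting $r$ inequalities. Double counting then does all the work: each $t_{ijk}$ and each $F_0$-term is counted $r-3$ times, each $M_0$ pair-term $r-2$ times, each crossing triangle contributing to $M_2$ twice, and each bad triple $r-3$ times, so the summed inequality divides through by $r-3$ and closes exactly, using only $\sum_\ell C_\ell = r(M_0+C)-2M_2-(r-2)M_0$ and the identity $M_2=M_0+C$. No local inequality for triples through a distinguished clique is ever needed beyond the $r=3$ base case. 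If you want to salvage your one-clique peeling, you would have to strengthen the induction hypothesis to something that localizes the surplus, which is precisely the difficulty the paper's symmetrization is designed to sidestep; as written, your step from $r-1$ to $r$ does not go through.
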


Now we are ready to finish the proof of Theorem~\ref{Thm:Main Thm}.

\medskip
\noindent
\textbf{Proof of Theorem~\ref{Thm:Main Thm}.} 
By Lemma~\ref{Lem:M2>M0}, we may assume that $M_2(G;P)=M_0(G;P)+C$ for some $C \geq 0$.
By Lemma~\ref{Lem:KeyLem}, it follows that
\begin{align}\label{Equ:tijk}
\sum_{1\leq i<j<k \leq r} t_{ijk}\geq F_0(G;P)+ (r-2)\cdot C-\omega (P).
\end{align}
Combining \eqref{Equ:t(G)} with~\eqref{Equ:tijk}, we have
\begin{align}\label{Equ:t(G)>r(e-r(n-r))-w(P)}
    t(G)\geq& F_0(G;P)+(r-2)\cdot C-\omega(P)-(r-3)[M_0(G;P)+C]-\frac{a}{2}(r-1)(r-2)+a\nonumber\\
    =&F_0(G;P)-(r-3)M_0(G;P)-\frac{a}{2}(r-1)(r-2)+a-\omega(P)+C\nonumber\\
    =& r(e-r(n-r))-\omega(P)+C \geq r(e-r(n-r))-\omega(P),
\end{align}
where the last equality holds by the definitions of $M_0(G;P)$ and $F_0(G;P)$ (see its justification in Appendix~A).
This completes the proof of Theorem~\ref{Thm:Main Thm}.
\QED

\subsection{Proof of Lemma~\ref{Lem:KeyLem}}\label{Sec:proof of Key lemma}

In this subsection, we finish the proof of Lemma~\ref{Lem:KeyLem} by induction on $r,$ the size of the greedy partition $P$ of $G.$
First, we prove the base case for $r\leq3$ in the following claim.

\begin{claim}\label{Clm:BaseCase}
    Let $G$ be a $K_4$-free graph and $P$ be any greedy partition of $G$ of size $r$ for some $r\leq3$. 
    Let $M_2(G;P)=M_0(G;P)+C$ for some $C\ge 0$, then we have
    $$\sum_{1\leq i<j<k \leq r} t_{ijk}\geq F_0(G;P)+ (r-2)\cdot C-\omega (P).$$
\end{claim}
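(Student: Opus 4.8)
The plan is to verify Claim~\ref{Clm:BaseCase} by a direct, finite case analysis over the possible values of $r\le 3$, exploiting that each clique $T_i$ has size at most $3$ so the graph $G[T_1\cup\dots\cup T_r]$ has at most $9$ vertices. First I would dispense with the trivial cases: if $r\le 2$ there are no triples $(i,j,k)$ at all, so $\sum t_{ijk}=0$; on the other hand $F_0(G;P)$ is an empty sum so it equals $0$, the factor $(r-2)$ is $\le 0$ while $C\ge 0$, and $\omega(P)=0$ since no triple exists. Hence the inequality reads $0\ge (r-2)C-0$, which holds because $(r-2)C\le 0$. (One should double-check the $r=2$ case: $(r-2)C=0$, so $0\ge 0$; and $r=1$ is analogous.) So the entire content of the claim is the case $r=3$.

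For $r=3$ the inequality to prove is exactly $t_{123}\ge F_0(G;P)+C-\omega(P)$, where $F_0(G;P)=3\bigl(e_{123}-3(|T_1|+|T_2|+|T_3|-3)\bigr)$, $C=M_2(G;P)-M_0(G;P)$, and $\omega(P)\in\{0,1\}$ records whether $G[T_1\cup T_2\cup T_3]\cong F_i$ for some $i\in\{1,2,3,4\}$. I would first observe that since $r=3$ the quantities simplify: $M_2(G;P)$ is the number of triangles using exactly two of the parts, and $t_{123}$ counts all triangles in $G[T_1\cup T_2\cup T_3]$ except those inside a single part (there are $a$ such, where $a$ counts the size-$3$ parts). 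In fact, unwinding the definitions, the desired inequality is equivalent to the statement that $g(G,P)=r(e-r(n-r))-t(G)\le \omega(P)$ for this small graph $G$ on the three parts — i.e.\ precisely the assertion that $F_1,F_2,F_3,F_4$ (and their sub-configurations obtained by shrinking parts) are the only triples where Gy\H ori--Keszegh's bound fails, and each fails by exactly $1$. So the core task is a classification: over all ways to choose $|T_1|\ge|T_2|\ge|T_3|\in\{1,2,3\}$ and all $K_4$-free graphs on those parts in which $P$ is a valid greedy partition (so that among parts of size $\le\ell$ the induced subgraph is $K_{\ell+1}$-free, and $|T_1|$ is a maximum clique), determine $g(G,P)$ and check it is $\le 0$ unless the configuration is one of the four named graphs.

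The bookkeeping I would set up: write $e_{ij}$ for the number of cross edges between $T_i,T_j$, so $e_{123}=\sum_{i<j}e_{ij}+\sum_i\binom{|T_i|}{2}$. Each $e_{ij}$ ranges over a bounded set (at most $|T_i||T_j|\le 9$ values), and for fixed edge-counts the number of triangles $t_{ij}$ and $t_{123}$ is further constrained. A clean way to organize the count is to use the local bounds already established in the proof of Lemma~\ref{Lem:M2>M0}: $t_{ij}\ge 2(e_{ij}-2(|T_i|+|T_j|-2))$, and to pair this with an upper bound on how much $t_{123}$ can fall short of $F_0(G;P)$ coming from the $K_4$-freeness (any four vertices span at most $2$ triangles). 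The main obstacle, and where the real work lies, is the middle-density regime — when $|T_1|=|T_2|=|T_3|=3$ and the $e_{ij}$ are $4$ or $5$ (the regime in which $F_1,F_2,F_3$ live), and the mixed-size analogue giving $F_4$ (sizes $3,3,2$). Here one cannot merely sum crude bounds; one must argue that to simultaneously make $\sum_{i<j}t_{ij}$ large (forcing $C$ up) and $t_{123}$ small, the bipartite patterns between the parts are essentially forced, and then exhibit that the only graphs achieving $g(G,P)=1$ are the four listed (up to isomorphism), with all other configurations giving $g(G,P)\le 0$. I expect this to be handled by fixing the degree sequences of the bipartite graphs $G[T_i,T_j]$ — there are only a handful of bipartite graphs on $3+3$ vertices with $4$ or $5$ edges up to isomorphism — and checking each combination, using the greedy-partition constraint (no triangle inside $T_2\cup T_3$ when $|T_2|=|T_3|=\ldots$, etc., is automatically satisfied here since parts are triangles, but the maximality of $|T_1|$ as a clique does rule out certain dense cross-configurations that would create a $K_4$ or a larger clique). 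After this case check, the claim follows.
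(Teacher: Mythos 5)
Your treatment of $r\le 2$ is fine, and your overall plan (a finite case check over part sizes and cross-edge patterns) is in the same spirit as the paper, which handles $r=3$ with $|T_1|\le 2$ by hand and the cases $|T_1|=3$ by an exhaustive computer search. But there is a genuine error in your reduction of the $r=3$ case. You assert that the inequality $t_{123}\ge F_0(G;P)+C-\omega(P)$ is ``equivalent to $g(G,P)=r(e-r(n-r))-t(G)\le\omega(P)$.'' It is not: for $r=3$ one has $r(e-r(n-r))=F_0(G;P)$, so the claim is equivalent to $g(G,P)\le\omega(P)-C$, and the term $C=M_2(G;P)-M_0(G;P)\ge 0$ does not go away. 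This is precisely the subtle point of the base case. For $F_3$ one computes $M_2=10$, $M_0=2e-36=8$, so $C=2$, while $F_0=12$ and $t=13$, giving $g(F_3,P)=-1\le 0$; similarly $g(F_4,P)=-1\le 0$. So under your proposed criterion $F_3$ and $F_4$ would \emph{not} be flagged as exceptional, yet they violate the strengthened inequality $t\ge F_0+C$ (which is why the paper must include them in the definition of $P$-bad triples, and why their blow-ups yield the Type~II counterexamples). A verification of ``$g\le\omega$'' alone proves a true but strictly weaker statement, and the induction in Lemma~\ref{Lem:KeyLem} — which propagates the $(r-2)\cdot C$ term — would not close from that weaker base case.

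Secondarily, even with the correct target inequality $t(G)\ge M_2(G;P)+e(G)-\mathrm{const}$ (as tabulated in the paper's Appendix~B), your middle-density analysis is only an outline: you say you ``would'' fix bipartite degree sequences and ``expect'' the forced structure to emerge, but the paper found this case analysis heavy enough to delegate to a computer enumeration over all $K_4$-free fillings of the three parts. To make your argument complete you would need to actually carry out that classification against the correct inequality, and confirm that the full list of violators is exactly $\{F_1,F_2,F_3,F_4\}$, each failing by exactly $1$.
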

\begin{proof}
When $r=1$ or $2,$ by definition, we have
$\sum_{1\leq i<j<k \leq r} t_{ijk}=F_0(G;P)=\omega(P)=0,$ and we are done.
The verification of $r=3$  involves detailed case analysis and computer assistance.
We defer the proof to Appendix B.
\end{proof}

Now we are ready to finish the proof of Lemma~\ref{Lem:KeyLem}.

\medskip
\noindent
\textbf{Completing the proof of Lemma~\ref{Lem:KeyLem}.}
By Claim~\ref{Clm:BaseCase}, we may assume that the statement is true for any $K_4$-free graph with any greedy partition of size at most $r-1$ for some $r\ge4$.
We now consider a $K_4$-free graph $G$ with greedy partition $P=\{T_1,...,T_r\}$ of size $r$.

Let $a$ be the number of $T_i$'s in $P$ of size $3$.
For $\ell \in [r]$,
let $G_\ell=G[V(G)\setminus T_\ell]$,
$P_\ell=P\setminus\{T_\ell\}$ (a greedy partition of $G_\ell$ of size $r-1$),
and let
$$M_2(G_\ell,T_\ell):=M_2(G;P)-M_2(G_\ell;P_\ell).$$

By Lemma~\ref{Lem:M2>M0},
suppose that $M_2(G;P)=M_0(G;P)+C$ for some $C\ge 0$.
Then for $\ell \in [r]$,
\begin{align*}
    M_2(G_\ell;P_\ell)&=M_2(G;P)-M_2(G_\ell, T_\ell)
    =M_0(G;P)+C-M_2(G_\ell, T_\ell)\\[3pt]
    &=M_0(G_\ell;P_\ell)+[M_0(G;P)+C-M_2(G_\ell, T_\ell)-M_0(G_\ell;P_\ell)].
\end{align*}
Let $C_\ell:=M_0(G;P)+C-M_2(G_\ell, T_\ell)-M_0(G_\ell;P_\ell)$.
By Lemma~\ref{Lem:M2>M0} again, 
we have $C_\ell\geq 0 $ for $\ell\in[r]$.

Let $I_\ell$ be the set of all triples $(i,j,k)$ such that $1\le i<j<k\le r$ and
$i,j,k\in [r]\setminus\{\ell\}.$
By the inductive hypothesis,
we have
\begin{align}\label{Equ:induction when 1<ell<a}
    \sum_{(i,j,k)\in I_\ell}t_{ijk}
    \geq F_0(G_\ell;P_\ell)+ (r-3) \cdot C_\ell -\omega (P_\ell).
\end{align}
Summing up all $\ell\in[r]$, by~\eqref{Equ:induction when 1<ell<a} and the definition of $C_\ell$, we have
\begin{align}\label{Inequ:tijk}
    \sum_{\ell\in[r]}\,\,\sum_{(i,j,k)\in I_\ell}t_{ijk}
    \geq
    &\sum_{\ell\in [r]}F_0(G_\ell; P_\ell)
    +(r-3)\cdot\sum_{\ell\in [r]} C_\ell -\sum_{\ell\in [r]}\omega (P_\ell)\nonumber\\
    =& X+(r-3)\left[r(M_0(G;P)+C)-\sum_{\ell\in [r]} M_2(G_\ell, T_\ell)-Y\right]-\sum_{\ell\in [r]}\omega (P_\ell),
\end{align}
where $X=\sum_{\ell\in [r]}F_0(G_\ell; P_\ell),$
and $Y=\sum_{\ell\in [r]} M_0(G_\ell;P_\ell).$

For the left-hand side of~\eqref{Inequ:tijk}, by double counting the contribution of $t_{ijk},$  we have
\begin{align}\label{Equ:sum tijk}
    \sum_{\ell\in[r]}\,\,\sum_{(i,j,k)\in I_\ell}t_{ijk}=
    (r-3)\cdot\sum_{1\leq i<j<k \leq r}t_{ijk}.
\end{align}
Recall that $F_0(G;P)=\sum_{1\leq i<j<k \leq r}3(e_{ijk}-3(|T_i|+|T_j|+|T_k|-3))$ (see~\eqref{Equ:defF0}),
for any triple $1\leq i<j<k \leq r$,
the term $3(e_{ijk}-3(|T_i|+|T_j|+|T_k|-3))$ is counted $r-3$ times in $X$.
Consequently,
\begin{align}\label{Equ:X}
    X=(r-3)\cdot F_0(G;P).
\end{align}
Similarly, by the definition of $M_0(G;P)=\sum_{1\leq i<j \leq r}2(e_{ij}-2(|T_i|+|T_j|-2))-a(r-1)$ (see~\eqref{Equ:defM0}),
each term $2(e_{ij}-2(|T_i|+|T_j|-2))$ is counted $r-2$ times in $Y$.
Moreover, the term $a(r-2)$ appears $r-a$ times in $Y$ (for $\ell \in [r]\setminus [a]$), while the term $(a-1)(r-2)$ appears $a$ times in $Y$ (for $\ell \in [a]$).
Thus,
\begin{align}\label{Equ:Y}
    Y&=(r-2)\sum_{1\leq i<j \leq r}2(e_{ij}-2(|T_i|+|T_j|-2))-a(r-2)(r-a)-(a-1)(r-2)a\nonumber\\
     &=(r-2)\sum_{1\leq i<j \leq r}2(e_{ij}-2(|T_i|+|T_j|-2))-a(r-1)(r-2)=(r-2)\cdot M_0(G;P).
\end{align}
Finally, by double counting the number of triangles and induced subgraphs isomorphic to $F_1,$ $F_2,$ $F_3$ or $F_4$ contributing to $M_2(G;P)$ and $\omega(P)$, respectively, we have
\begin{align}\label{Equ:M2 and w}
    \sum_{\ell\in [r]}M_2(G_\ell, T_\ell)=2M_2(G;P), \ \text{and}\
    \sum_{\ell\in [r]}\omega (P_\ell)=(r-3)\cdot \omega(P).
\end{align}
Combining equations~\eqref{Equ:sum tijk},~\eqref{Equ:X},~\eqref{Equ:Y}, and~\eqref{Equ:M2 and w} with inequality~\eqref{Inequ:tijk}, we obtain that
\begin{align*}
    (r-3)\sum_{1\leq i<j<k \leq r}t_{ijk}
    \geq &(r-3)\cdot F_0(G;P)
    +(r-3)\cdot[r(M_0(G;P)+C)-2M_2(G;P)-\\
    &(r-2)\cdot M_0(G;P)] - (r-3)\cdot \omega(P).
\end{align*}
Since $M_2(G;P)=M_0(G;P)+C,$ we conclude that
\begin{align*}
    \sum_{1\leq i<j<k \leq r}t_{ijk}&\geq
    F_0(G;P)+2M_0(G;P)-2M_2(G;P)+r C-\omega(P)\\
    &=F_0(G;P)+(r-2)\cdot C-\omega(P),
\end{align*}
which completes the proof of Lemma~\ref{Lem:KeyLem}.
\QED

\section{Concluding Remarks}\label{Sec:Concluding Remarks}

In this paper, we first disprove the assertion in Conjecture~\ref{Conj1.2}, proposed by Gy\H{o}ri and Keszegh~\cite{GK2017}, concerning the number $t(G)$ of triangles in $K_4$-free graphs $G$ under size and greedy partition constraints.  
Second, we provide a corrected lower bound, showing that $t(G) \ge r(e - r(n-r)) - \omega(P)$ for any $n$-vertex $K_4$-free graph $G$ with $e$ edges and any greedy partition $P$ of size $r$.  
It would be interesting to further improve this bound, particularly the term involving $\omega(P)$.

We remark that the bound given by Theorem~\ref{Thm:Main Thm} can be tight for infinitely many $K_4$-free graphs. 
Let $G$ be any complete $3$-partite graph with parts $X,Y$ and $Z$. 
Suppose that the sizes of $X,Y$ and $Z$ are $x,y$ and $z$, respectively and $x\ge y\ge z.$ 
It is easy to see that the greedy partition $P$ of $G$ is unique, which consists of $z$ triangles, $y-z$ edges and $x-y$ isolated vertices.
Thus we have $n=x+y+z,$  $e=xy+xz+yz,$  $r=r(P)=x,$ and 
moreover it is easy to see that $\omega(P)=0.$
By Theorem~\ref{Thm:Main Thm}, we have $xyz=t\ge r(e-r(n-r))=xyz,$ thus our theorem is tight for complete $3$-partite graph.
For any positive integer vector $\mathbf{k}=(k_1,k_2,k_3)\in \mathbb{Z}_+^3$ and $i\in\{1, 2\},$ Theorem~\ref{Thm:Main Thm} is also tight for graphs $F_i^{\mathbf{k}}$ that we constructed in Subsection~\ref{subsec:type1}, since the value $\omega(P_i^{\mathbf{k}}) = k_1k_2k_3$ exactly matches the gap between $t$ and $r(e-r(n-r))$.

Our second remark concerns the maximum of the quantity $g(G,P)$ as a function of $n$.  
Formally, let $g(n) = \max_{(G,P)} g(G,P)$, where the maximum is taken over all $n$-vertex $K_4$-free graphs $G$ and all greedy partitions $P$ of $G$.  
We claim that $g(n) = \Theta(n^3)$.
Indeed, for any greedy partition $P$ of $G$ of size $r$,
$\omega(P)\leq \binom{r}{3}\le \binom{n}{3}$.
By Theorem~\ref{Thm:Main Thm}, we have
$t\geq r(e-r(n-r))-\binom{n}{3}$, which yields that $g(n)\leq \binom{n}{3}=O(n^3)$.
On the other hand, when $9\mid n$, 
let $\mathbf{k}=(\frac{n}{9},\frac{n}{9},\frac{n}{9})$.
Consider the graph $F_1^{\mathbf{k}}$ and its greedy partition $P_1^{\mathbf{k}}$ defined in Subsection~\ref{subsec:type1}
(when $9\nmid n$, just consider a suitable $n$-vertex subgraph of $F_1^{\mathbf{k}}$ for $\mathbf{k}=(\lceil\frac{n}{9}\rceil,\lceil\frac{n}{9}\rceil,\lceil\frac{n}{9}\rceil)$).
It follows that
$r(P_1^{\mathbf{k}})(e(F_1^{\mathbf{k}})-r(P_1^{\mathbf{k}})(n-r(P_1^{\mathbf{k}})))-t(P_1^{\mathbf{k}})=k_1k_2k_3$, which implies that $g(n) = \Omega(n^3)$, as desired.

Finally, it is worth noting that, although the counterexamples in Section~\ref{sec:counterexamples} disprove the assertion of Conjecture~\ref{Conj1.2} on $t(G)$, they do not violate the desired inequality $t_e(G) \ge e - r(n - r)$.  
As a consequence of Theorem~\ref{Thm:Main Thm}, when $\omega(P) < r$, applying the lemma of Huang and Shi~\cite{HS2014} that $t_e(G) \ge t(G)/r(P)$, we see that the lower bound on $t_e$ in Conjecture~\ref{Conj1.2} holds.  
It remains an interesting open question whether $t_e(G) \ge e - r(n - r)$ holds in general.

   \small
   \bibliographystyle{unsrt}

\normalsize
\section*{Appendix A: Justification of the inequality~\eqref{Equ:t(G)>r(e-r(n-r))-w(P)}}\label{appendix: equality proof}

Starting from inequality~\eqref{Equ:t(G)>r(e-r(n-r))-w(P)},
it suffices to show that
$$F_0(G;P)-(r-3)M_0(G;P)-\frac{a}{2}(r-1)(r-2)+a= r(e-r(n-r)).$$
Since $n=3a+2b+c$ and $r=a+b+c$, we have $3a+b=n-r+a.$
From the definition of $M_0(G;P)$ in~\eqref{Equ:defM0}, we obtain that
\begin{align}\label{Equ:defM0'}
     M_0(G;P)=&2\sum_{1\leq i<j \leq r} e_{ij} - 4n(r-1) + 8\binom r 2 -a(r-1)
             = 2e+2(3a+b)(r-2)-(r-1)(4n-4r+a)\nonumber\\
             =&2e + 2(n-r+a)(r-2) -(r-1)(4n-4r+a) = 2e -2(n-r)r +a(r-3).
\end{align}
\noindent
Similarly, by the definition of $F_0(G;P)$ in~\eqref{Equ:defF0}, we have
\begin{align}\label{Equ:defF0'}
    F_0(G;P)=&\sum_{1\leq i<j<k \leq r}3(e_{ijk}-3(|T_i|+|T_j|+|T_k|-3))
            =3\sum_{1\leq i<j<k \leq r}e_{ijk} - 9n\binom{r-1}{2} + 27 \binom{r}{3}\nonumber\\
            =&3(e-3a-b)(r-2) + 3(3a+b) \binom {r-1} 2 - \frac 9 2 (n-r)(r-1)(r-2)\nonumber\\[4pt]
    =&3e(r-2) - 3(n-r+a)(r-2) + \frac 3 2 (n-r+a) (r-1)(r-2) - \frac 9 2 (n-r)(r-1)(r-2)\nonumber\\[4pt]
    =&3e(r-2) - 3(n-r)r(r-2) + \frac{3}{2} a(r-2)(r-3).
\end{align}
Combining~\eqref{Equ:defM0'} and~\eqref{Equ:defF0'}, we obtain that
\begin{align*}
    &F_0(G;P)-(r-3)M_0(G;P)-\frac{a}{2}(r-1)(r-2)+a\\
    =&3e(r-2) - 3(n-r)r(r-2) + \frac 3 2 a (r-2)(r-3)  -
     (r-3)[2e - 2(n-r)r + a(r-3)] -\frac{a}{2}r(r-3)\\
    =&er - (n-r)r^2 + \frac a 2 (r-3) [3(r-2) - 2(r-3) -r] = r(e-r(n-r)),
\end{align*}
which completes the proof of~\eqref{Equ:t(G)>r(e-r(n-r))-w(P)}.
\QED

\section*{Appendix B: Proof of Claim~\ref{Clm:BaseCase}}\label{appendix:claimproof}

    Note that, for $r=3$, we have $F_0(G;P)=3(e(G)-3(v(G)-3))$.
    We point out that it is always the case that $\omega(P)\in \{0,1\}$. 
    The claim reduces to proving
    \begin{align}\label{Equ: When r=3, t>F+C-w}
        t(G)\geq F_0(G;P)+M_2(G;P)-M_0(G;P)-\omega(P).
    \end{align}
    Let $P=\{T_1,T_2,T_3\}$ and let $a,b,c$ be the number of cliques of size $3,2$ and $1$, respectively.

    Suppose $|T_1|\le 2,$ i.e., $a=0$.
    By the definition of greedy partition, we have
    $t(G)=M_2(G;P)=\omega(P)=0,$
    and $M_0(G;P)=\sum_{1\le i<j\in[3]}2(e_{ij}-2(|T_i|+|T_j|-2))-2a=2(e+v(G)-3)-8v(G)+24.$
    Thus inequality~\eqref{Equ: When r=3, t>F+C-w} is equivalent to
    $e(G)-3v(G)+9\le 0,$
    which holds by straightforward case analysis.

    We verify the remaining subcases for $r=3$ with $|T_1|= 3$ (i.e., $a\ge 1$) via computer assistance (see the program below for details).
    For every $K_4$-free graph with parameters $(a, b, c)$ where $a\ge 1$, 
    our program verifies whether or not the input graph satisfies the inequality
    \begin{align}\label{Equ: When r=3, t>F+M2-M0}
    t(G)\geq F_0(G;P)+M_2(G;P)-M_0(G;P).
    \end{align}
    If it satisfies, then evidently it satisfies \eqref{Equ: When r=3, t>F+C-w};
    otherwise, the program identifies graphs that do not satisfy this inequality (called {\it  Counterexamples to~\eqref{Equ: When r=3, t>F+M2-M0}}), along with the parameters $\{t(G), M_2(G;P), e(G)\}$.
    These are summarized in Table~\ref{tab:program}.
    Note that $\omega(P)=1$ holds for all Counterexamples to~\eqref{Equ: When r=3, t>F+M2-M0}.
    After thorough calculations, inequality~\eqref{Equ: When r=3, t>F+C-w} holds for all Counterexamples to~\eqref{Equ: When r=3, t>F+M2-M0}, thus Claim~\ref{Clm:BaseCase} holds and we are done. \QED
\bigskip

\begin{table}[htbp]
\centering
\renewcommand{\arraystretch}{1.6}
\begin{NiceTabular}{c|c|c|c|>{\centering\arraybackslash}m{5cm}}
\toprule
$(a, b, c)$ & $M_0(G;P)$ & $F_0(G;P)$ & Precise expression of Inequality \eqref{Equ: When r=3, t>F+M2-M0} & Counterexamples to~\eqref{Equ: When r=3, t>F+M2-M0} and parameters $\{t(G), M_2(G;P), e(G)\}$\\
\midrule

$(3,0,0)$ & $2e-36$ & $3e-54$ & $t(G)\ge M_2(G;P)+e(G)-18$ & \!\!\!$F_1: \{11, 8, 22\}$ \quad $F_2: \{14, 10, 23\}$ \quad $F_3: \{13, 10, 22\}$\\
\midrule

$(2,1,0)$ & $2e-30$ & $3e-45$ & $t(G)\ge M_2(G;P)+e(G)-15$ & \!\!\!$F_4: \{10, 8, 18\}$ \\
\midrule

$(2,0,1)$ & $2e-24$ & $3e-36$ & $t(G)\ge M_2(G;P)+e(G)-12$ & $\emptyset$ \\
\midrule

$(1,2,0)$ & $2e-24$ & $3e-36$ & $t(G)\ge M_2(G;P)+e(G)-12$ & $\emptyset$ \\
\midrule

$(1,1,1)$ & $2e-18$ & $3e-27$ & \!\!\!\!$t(G)\ge M_2(G;P)+e(G)-9$ & $\emptyset$ \\
\midrule

$(1,0,2)$ & $2e-12$ & $3e-18$ & \!\!\!\!$t(G)\ge M_2(G;P)+e(G)-6$ & $\emptyset$ \\
\bottomrule

\CodeAfter
\tikz \draw [white, line width=0.7pt] 
    (1-|1) -- (last-|1)    
    (1-|6) -- (last-|6);  
\end{NiceTabular}
\caption{All the subcases for $r=3$ with $|T_1|= 3$ in Claim~\ref{Clm:BaseCase}.}
\label{tab:program}
\end{table}
\clearpage

\noindent\textbf{The program for $r=3$ with $|T_1|= 3$ in Claim~\ref{Clm:BaseCase}.}
\lstset{language=Python}
\begin{lstlisting}
import networkx as nx
import itertools
import os
import matplotlib.pyplot as plt

class GraphSolution:
    # Initialize the graphs: we first fix the structure between two of {T_1,T_2,T_3} (self.isomorphism_list) and then traverse all the remaining edges (self.possible_add_edge1/2).
    # For different types, the difference between the inequalities is stored in self.constant, i.e., if t(G)<m_2(G)+e(G)-self.constant.
    def __init__(self):
        self.graph = None
        self.isomorphism_list = []
        self.possible_add_edge1 = []
        self.possible_add_edge2 = []
        self.constant = 0
        self.ori_tri_list = []            # The number of triangles in {G[T_1],G[T_2]}, {G[T_1],G[T_3]}, and {G[T_2],G[T_3]}.
        self.subgraphs = []               # The induced subgraphs on vertex sets T_1UT_2, T_1UT_3, and T_2UT_3.
        self.save_root = './graph_result'

    # Verify if the graph G is k_4-free by checking the number of edges of subgraphs induced on any 4 vertices of G.
    def is_k4_free(self, G=None):
        if G is None:
            G = self.graph
        for nodes in itertools.combinations(G.nodes, 4):
            subgraph = G.subgraph(nodes)
            if subgraph.number_of_edges() == 6:
                return False
        return True

    # Calculate m_2(G) by adding up m_2(G[T_1UT_2]), m_2(G[T_1UT_3]), and m_2(G[T_2UT_3]).
    def triangle_nums_bt_tri(self, G=None):
        if G is None:
            G = self.graph
        count = 0
        for i, selected_nodes in enumerate(self.subgraphs):
            subgraph = G.subgraph(selected_nodes)
            triangle_count = sum(nx.triangles(subgraph).values()) // 3 - self.ori_tri_list[i]
            count += triangle_count
        return count

    # Determine whether the graph is a new graph up to isomorphism.
    def isomorphic_list(self, graph_list, new_graph):
        i = 0
        for _,graph in enumerate(graph_list):
            GM = nx.isomorphism.GraphMatcher(graph,new_graph)
            if GM.is_isomorphic():
                break
            else:
                i += 1
        if i == len(graph_list):
            graph_list.append(new_graph)
        else:
            new_graph = None
        return graph_list, new_graph

    # Initialize graphs according to different types.
    def initialize_graph(self, graph_type):
        initializers = {
            1: self._initial_graph_1,
            2: self._initial_graph_2,
            3: self._initial_graph_3,
            4: self._initial_graph_4,
            5: self._initial_graph_5,
            6: self._initial_graph_6,
        }
        # Read graphs, and the structure between the two given cliques.
        # Traverse all the remaining edges, and select different inequalities.
        if graph_type in initializers:
            self.graph, self.isomorphism_list, self.possible_add_edge1, self.possible_add_edge2, self.constant, self.ori_tri_list, self.subgraphs = initializers[graph_type]()
        else:
            raise ValueError("Invalid graph type")

    # Add edges and verify if the inequalities hold.
    def forward(self):
        os.makedirs(self.save_root, exist_ok=True)
        graph_list = []

        count = 0
        # Read subgraphs induced on the vertex set of the two fixed cliques (G_1).
        for i, isomorphism in enumerate(self.isomorphism_list):
            print(f'Starting to add {i+1} isomorphism')
            # Traverse all the subgraphs induced on the vertex set of one of the undetermined structure between two cliques (G_2).
            for j in range(1, len(self.possible_add_edge1) + 1):
                combinations1 = list(itertools.combinations(self.possible_add_edge1, j))
                # Traverse all the subgraphs induced on the vertex set of the other undetermined structure between two cliques (G_3).
                for k in range(1, len(self.possible_add_edge2) + 1):
                    combinations2 = list(itertools.combinations(self.possible_add_edge2, k))
                    for combo1 in combinations1:
                        for combo2 in combinations2:
                            G_new = self.graph.copy()
                            G_new.add_edges_from(isomorphism)     # Add edges for subgraph G_1.
                            G_new.add_edges_from(combo1)          # Add edges for subgraph G_2.
                            G_new.add_edges_from(combo2)          # Add edges for subgraph G_3.
                            # Verify if the graph G is k_4-free
                            if self.is_k4_free(G_new):
                                num_edge = G_new.number_of_edges()                              # Calculate the number of edges of G.
                                triangle_count = sum(nx.triangles(G_new).values()) // 3         # Calculate the number of triangles of G.
                                m2_num = self.triangle_nums_bt_tri(G_new)                       # Calculate m_2(G).
                                # Verify if t(G)<m_2(G)+e(G)-self.constant
                                if triangle_count < m2_num + num_edge - self.constant:
                                    graph_list, new_graph = self.isomorphic_list(graph_list, G_new)  # Determine whether the graph is a new graph.
                                    if new_graph is not None:
                                        plt.figure(figsize=(8, 6))
                                        pos = nx.spring_layout(G_new)  # Select a layout algorithm
                                        nx.draw(G_new, with_labels=True, node_color='lightblue', edge_color='gray')
                                        filename = os.path.join(self.save_root, f'graph_edge={num_edge}_triangle={triangle_count}_m2={m2_num}_{count}.png')
                                        count += 1
                                        plt.savefig(filename)    # Save the graph.
                                        plt.close()

    # Type I (a=3, and T_1={1,2,3}, T_2={4,5,6}, T_3={7,8,9}).
    def _initial_graph_1(self):
        G = nx.Graph()
        G.add_nodes_from([1, 2, 3, 4, 5, 6, 7, 8, 9])
        G.add_edges_from([(1, 2), (1, 3), (2, 3), (4, 5), (4, 6), (5, 6), (7, 8), (7, 9), (8, 9)])
        # Different K_4-free graphs on vertex set T_1UT_2.
        isomorphism_list = [
            [],
            [(1,4)], [(1,4),(1,5)], [(1,4),(2,5)], [(1,4),(1,5),(3,5)], [(1,4),(1,5),(2,6)],
            [(1,4),(2,5),(3,6)], [(1,4),(1,5),(2,5),(2,6)], [(1,4),(1,5),(3,5),(2,6)],
            [(1,4),(1,5),(3,4),(2,6)],
            [(1,4),(1,5),(3,4),(2,5),(2,6)],
            [(1,4),(1,5),(3,4),(2,5),(2,6),(3,6)]
        ]
        # Add possible edges between T_1 and T_3.
        possible_add_edge1 = list(set(itertools.combinations([1, 2, 3, 7, 8, 9], 2)) - set(G.edges))
        # Add possible edges between T_2 and T_3.
        possible_add_edge2 = list(set(itertools.combinations([4, 5, 6, 7, 8, 9], 2)) - set(G.edges))
        # self_constant=18, i.e., check if t(G)<m_2(G)+e(G)-18
        return G, isomorphism_list, possible_add_edge1, possible_add_edge2, 18, [2,2,2], [[1, 2, 3, 4, 5, 6], [1, 2, 3, 7, 8, 9], [4, 5, 6, 7, 8, 9]]

    # Type II (a=2, b=1, and T_1={1,2,3}, T_2={4,5,6}, T_3={7,8}).
    def _initial_graph_2(self):
        G = nx.Graph()
        G.add_nodes_from([1, 2, 3, 4, 5, 6, 7, 8])
        G.add_edges_from([(1, 2), (1, 3), (2, 3), (4, 5), (4, 6), (5, 6), (7, 8)])
        isomorphism_list = [
            [],
            [(1,4)], [(1,4),(1,5)], [(1,4),(2,5)], [(1,4),(1,5),(3,5)], [(1,4),(1,5),(2,6)],
            [(1,4),(2,5),(3,6)], [(1,4),(1,5),(2,5),(2,6)], [(1,4),(1,5),(3,5),(2,6)],
            [(1,4),(1,5),(3,4),(2,6)],
            [(1,4),(1,5),(3,4),(2,5),(2,6)],
            [(1,4),(1,5),(3,4),(2,5),(2,6),(3,6)]
        ]
        possible_add_edge1 = list(set(itertools.combinations([1, 2, 3, 7, 8], 2)) - set(G.edges))
        possible_add_edge2 = list(set(itertools.combinations([4, 5, 6, 7, 8], 2)) - set(G.edges))
        # self_constant=15, i.e., check if t(G)<m_2(G)+e(G)-15
        return G, isomorphism_list, possible_add_edge1, possible_add_edge2, 15, [2,1,1], [[1, 2, 3, 4, 5, 6], [1, 2, 3, 7, 8], [4, 5, 6, 7, 8]]

    # Type III (a=2, c=1, and T_1={1,2,3}, T_2={4,5,6}, T_3={7}).
    def _initial_graph_3(self):
        G = nx.Graph()
        G.add_nodes_from([1, 2, 3, 4, 5, 6, 7])
        G.add_edges_from([(1, 2), (1, 3), (2, 3), (4, 5), (4, 6), (5, 6)])
        isomorphism_list = [
            [],
            [(1,4)], [(1,4),(1,5)], [(1,4),(2,5)], [(1,4),(1,5),(3,5)], [(1,4),(1,5),(2,6)],
            [(1,4),(2,5),(3,6)], [(1,4),(1,5),(2,5),(2,6)], [(1,4),(1,5),(3,5),(2,6)],
            [(1,4),(1,5),(3,4),(2,6)],
            [(1,4),(1,5),(3,4),(2,5),(2,6)],
            [(1,4),(1,5),(3,4),(2,5),(2,6),(3,6)]
        ]
        possible_add_edge1 = list(set(itertools.combinations([1, 2, 3, 7], 2)) - set(G.edges))
        possible_add_edge2 = list(set(itertools.combinations([4, 5, 6, 7], 2)) - set(G.edges))
        # self_constant=12, i.e., check if t(G)<m_2(G)+e(G)-12.
        return G, isomorphism_list, possible_add_edge1, possible_add_edge2, 12, [2,1,1], [[1, 2, 3, 4, 5, 6], [1, 2, 3, 7], [4, 5, 6, 7]]

    # Type IV (a=1, b=2, and T_1={1,2,3}, T_2={4,5}, T_3={6,7}).
    def _initial_graph_4(self):
        G = nx.Graph()
        G.add_nodes_from([1, 2, 3, 4, 5, 6, 7])
        G.add_edges_from([(1, 2), (1, 3), (2, 3), (4, 5), (6, 7)])
        # Different K_3-free graphs on vertex set T_2UT_3.
        isomorphism_list = [[], [(4,6)], [(4,6),(5,7)]]
        # Add possible edges between T_1 and T_2.
        possible_add_edge1 = list(set(itertools.combinations([1, 2, 3, 4, 5], 2)) - set(G.edges))
        # Add possible edges between T_1 and T_3.
        possible_add_edge2 = list(set(itertools.combinations([1, 2, 3, 6, 7], 2)) - set(G.edges))
        # self_constant=12, i.e., check if t(G)<m_2(G)+e(G)-12.
        return G, isomorphism_list, possible_add_edge1, possible_add_edge2, 12, [1,1,0], [[1, 2, 3, 4, 5], [1, 2, 3, 6, 7], [4, 5, 6, 7]]

    # Type V (a=1, b=1, c=1, and T_1={1,2,3}, T_2={4,5}, T_3={6}).
    def _initial_graph_5(self):
        G = nx.Graph()
        G.add_nodes_from([1, 2, 3, 4, 5, 6])
        G.add_edges_from([(1, 2), (1, 3), (2, 3), (4, 5)])
        # Different K_3-free graphs on vertex set T_2UT_3.
        isomorphism_list = [[], [(4,6)]]
        possible_add_edge1 = list(set(itertools.combinations([1, 2, 3, 4, 5], 2)) - set(G.edges))
        possible_add_edge2 = list(set(itertools.combinations([1, 2, 3, 6], 2)) - set(G.edges))
        # self_constant=9, i.e., check if t(G)<m_2(G)+e(G)-9.
        return G, isomorphism_list, possible_add_edge1, possible_add_edge2, 9, [1,1,0], [[1, 2, 3, 4, 5], [1, 2, 3, 6], [4, 5, 6]]

    # Type VI (a=1, c=2, and T_1={1,2,3}, T_2={4}, T_3={5}).
    def _initial_graph_6(self):
        G = nx.Graph()
        G.add_nodes_from([1, 2, 3, 4, 5])
        G.add_edges_from([(1, 2), (1, 3), (2, 3)])
        # Different K_2-free graphs on vertex set T_2UT_3.
        isomorphism_list = [[]]
        possible_add_edge1 = list(set(itertools.combinations([1, 2, 3, 4], 2)) - set(G.edges))
        possible_add_edge2 = list(set(itertools.combinations([1, 2, 3, 5], 2)) - set(G.edges))
        # self_constant=6, i.e., check if t(G)<m_2(G)+e(G)-6.
        return G, isomorphism_list, possible_add_edge1, possible_add_edge2, 6, [1,1,0], [[1, 2, 3, 4], [1, 2, 3, 5], [4, 5]]

# Run all the types and output the counterexamples
graph_solution = GraphSolution()
for i in range(6,0,-1):
    print(i)
    graph_solution.initialize_graph(i)
    graph_solution.forward()
\end{lstlisting}

\end{document}